\documentclass[11pt]{amsart}

\usepackage[english]{babel}
\usepackage[letterpaper,top=2cm,bottom=2cm,left=3cm,right=3cm,marginparwidth=1.75cm]{geometry}

\usepackage{amsmath,amssymb,amsfonts,amsthm}
\usepackage{graphicx}
\usepackage{tikz}
\usepackage[colorlinks=true, allcolors=blue]{hyperref}
\usepackage{enumerate,xcolor}
\usepackage{indentfirst}
\usepackage{comment}

\newtheorem{theorem}{Theorem}

\newtheorem{corollary}[theorem]{Corollary}
\newtheorem{proposition}[theorem]{Proposition}
\newtheorem{lemma}[theorem]{Lemma}
\newtheorem{example}[theorem]{Example}
\newtheorem{remark}[theorem]{Remark}
\newtheorem{question}[theorem]{Question}

\DeclareMathOperator{\Der}{Der}
\DeclareMathOperator{\Aut}{Aut}
\DeclareMathOperator{\Spec}{Spec}
\DeclareMathOperator{\ML}{ML}
\DeclareMathOperator{\Pol}{Pol}
\DeclareMathOperator{\Bir}{Bir}

\newcommand{\K}{\Bbbk}

\newcommand{\id}{\operatorname{id}}
\newcommand{\D}{\mathcal{D}}

\title[Simple derivations and isotropy on Danielewski-type algebras]
{Simple derivations and isotropy on Danielewski-type algebras}

\author{Rene Baltazar}
\date{}

\begin{document}

\begin{abstract}
Let $\K$ be an algebraically closed field of characteristic zero. We study
isotropy groups of simple derivations on Danielewski-type algebras
\[
A_{c,q}=\K[x,y,z]/(c(x)z-q(x,y)).
\]
More precisely, motivated by the recent result of Mendes and Pan for simple derivations of $\K[x,y]$ (\cite[Theorem 1]{MP}), we ask whether every simple derivation $D$ of $A_{c,q}$ has trivial isotropy group.

We prove that this property holds generically: for each fixed pair of degrees $\deg(c)\geq2$
and $\deg_y(q)\geq2$, there exists a nonempty Zariski open subset of the parameter space of reduced
pairs $(c,q)$ such that every simple derivation of $A_{c,q}$ has trivial isotropy group. On the other hand, we construct a special Danielewski-type algebra
admitting a simple derivation with nontrivial isotropy. Thus, the Mendes-Pan phenomenon holds generically for Danielewski-type algebras, but fails in full generality. Over
$\K=\mathbb C$, we also discuss the associated foliations and formulate questions about their polynomial and birational symmetries.
\end{abstract}

\maketitle

\section{Introduction}

Let $\K$ be an algebraically closed field of characteristic zero. A derivation $D$ of a $\K$-algebra $A$ is called simple if the only $D$-stable ideals are the trivial.
Geometrically, when $A$ is the coordinate ring of an affine variety, this
means that the vector field defined by $D$ has no proper invariant algebraic
subset. The study of simple derivations has its origins in the papers of Seidenberg \cite{Seidenberg1967}, Hart \cite{Hart1975}, and Shamsuddin \cite{Shamsuddin1977,Shamsuddin1982}. These works initiated a line of research connecting differential ideals, polynomial vector fields, and the algebraic structure of affine varieties.

The group $\Aut(A)$ of all $\K$-algebra automorphisms of $A$ acts on $\Der(A)$, the $\K$-vector space of all $\K$-derivations of $A$, by conjugation: for $\sigma\in\Aut(A)$ and $D\in\Der(A)$, $\sigma\cdot D=\sigma D\sigma^{-1}$. The \textit{isotropy} group of $D$ with respect to this action is
\[
\Aut_D(A)
=
\{\sigma\in\Aut(A)\mid \sigma D=D\sigma\}.
\]

Equivalently,  $\Aut_D(A) $ is the automorphism group of the differential  $\K $-algebra  $(A,D) $. This point of view was studied for polynomial differential rings in two variables in \cite{PanBalta}.

In the case of the polynomial algebra $\K[x,y]$, a recent result of Mendes and Pan asserts that if $D$ is a simple derivation, then its isotropy group is trivial (\cite[Theorem 1]{MP}). This result extends the earlier case of simple Shamsuddin derivations, for which the triviality of the isotropy group was proved in \cite[Theorem 6]{Baltazar2016}.

The purpose of this paper is to investigate an analogue of the Mendes--Pan
phenomenon for Danielewski-type algebras. More precisely, we consider algebras
of the form
\[
A_{c,q}
=
\K[x,y,z]/(c(x)z-q(x,y)),
\]
where $c(x)\in\K[x]$ has degree at least two and $q(x,y)\in\K[x,y]$ is
quasi-monic with respect to $y$. These algebras provide a natural extension
of the affine-plane setting. The affine plane occurs in the degenerate case
$c(x)\in\K^\ast$, while the case $\deg(c)\geq2$ gives genuinely new
affine surfaces, with different automorphism group.

We are interested in the following question:

\begin{question}
Let $D\in\Der(A_{c,q})$ be a simple derivation. Is it true that
\[
\Aut_D(A_{c,q})=\{\id\}?
\]
\end{question}

In Section \ref{Preliminares}, we recall the necessary background on Danielewski-type algebras, their reduced form, and the structure of their automorphism groups. 

In Section \ref{3}, we prove the main reduction to the diagonal group. More precisely, we show that the isotropy group of a simple derivation has trivial intersection
with the unipotent subgroup $U_{\K}(A_{c,q})$. Consequently, the canonical
homomorphism $\psi:\Aut(A_{c,q})\longrightarrow G_{c,q}$
restricts to an injection $\Aut_D(A_{c,q})\hookrightarrow G_{c,q}$. We also formulate a fixed-point criterion in terms of the map $\psi$.

In Section \ref{4}, we treat the generic case in degree $d=\deg_y(q)\geq3$. We introduce a lattice $\Lambda(c,q)\subseteq\mathbb Z^2$ determined by the
monomials appearing in $c$ and $q$, and prove that $\Lambda(c,q)=\mathbb Z^2$ is sufficient to
ensure that $G_{c,q}=\{1\}$. We then exhibit an explicit nonempty Zariski open subset on which this lattice condition holds. This proves the generic triviality of the isotropy group in
degree $d\geq3$.

In Section \ref{5}, we study the quadratic case $d=\deg_y(q)=2$. Although $G_{c,q}$ is not trivial in this case, we prove that, on a suitable Zariski open subset of the parameter space, every lift of the nontrivial
diagonal element fixes a maximal ideal. This gives the generic triviality of the isotropy group in degree two. Together with Section 4, this proves that
the Mendes-Pan phenomenon holds generically for Danielewski-type algebras of every degree $d\geq2$.

In Section \ref{6}, we show that the generic result cannot be extended to all Danielewski-type algebras. We consider
\[
\mathbb A=\K[x,y,z]/(x^2z-y^2-1),
\]
which has a fixed-point-free diagonal involution. We then construct a simple derivation on the invariant quotient algebra and lift it to a simple derivation of $\mathbb A$ commuting with this involution. This gives a simple derivation with nontrivial isotropy, and therefore shows that the analogue of the Mendes-Pan theorem fails in full generality.

Finally, in Section \ref{7}, over $\K=\mathbb C$, this problem also admits a geometric formulation in terms of foliations. A derivation $D$ of $A_{c,q}$ defines an algebraic foliation on the smooth locus of $X=\Spec(A_{c,q})$. Thus one may study not
only automorphisms commuting with $D$, but also automorphisms preserving the associated foliation. This viewpoint is inspired by the work of Cousin, Mendes and Pan on foliations associated to simple derivations of
$\mathbb C[x,y]$; see \cite{CMP}. Motivated by this perspective, we conclude the paper by proposing several questions about polynomial and birational symmetries of these foliations in the Danielewski-type setting.

\section{Preliminaries}\label{Preliminares}

In this section, we recall some structural facts on Danielewski-type algebras and fix the notation used throughout the paper. Furthermore, $\K$ denotes an algebraically closed field of characteristic zero. Let
\[
A_{c,q}
=
\K[x,y,z]/(c(x)z-q(x,y)),
\]
where $c(x)\in\K[x]$ is nonconstant and $q(x,y)\in\K[x,y]$ is quasi-monic of degree $d\geq 2$ with respect to $y$. We denote by $\bar x,\bar y,\bar z$ the images of $x,y,z$ in $A_{c,q}$. The algebra $A_{c,q}$ carries the locally nilpotent derivation
\[
\xi_{c,q}(\bar x)=0,
\quad
\xi_{c,q}(\bar y)=c(\bar x),
\quad
\xi_{c,q}(\bar z)=\partial_yq(\bar x,\bar y).
\]
We call $\xi_{c,q}$ the standard locally nilpotent derivation of $A_{c,q}$.

Let $R$ be a ring and let $p(y)\in R[y]$ be a quasi-monic polynomial of degree $d\geq 1$. We say that $p(y)$ is centered if the coefficient of $y^{d-1}$ is zero. We recall also the reduced form used in \cite{ABEG}. The algebra $A_{c,q}$ is said to be in reduced form if $c(x)$ is centered, $q(x,y)$ is centered as a polynomial in $\K[x,y]$ and $\deg_x(q)<\deg(c)$.

By \cite[Proposition 4]{ABEG}, every Danielewski-type algebra $A_{c,q}$ with $c(x)$ nonconstant and $q(x,y)$ quasi-monic of degree at least two with respect to $y$ is isomorphic to an algebra $A_{\widetilde c,\widetilde q}$
in reduced form. Moreover, this isomorphism is compatible with the standard locally nilpotent derivations.

Since we study pairs $(A_{c,q},D)$, when passing to reduced form we also transport the derivation. If
\[
\Phi:A_{c,q}\longrightarrow A_{\widetilde c,\widetilde q}
\]
is such an isomorphism, we denote $\widetilde D=\Phi D\Phi^{-1}$. Then $D$ is simple if and only if $\widetilde D$ is simple. Moreover, conjugation by $\Phi$ gives an isomorphism of isotropy groups
\[
\Aut_D(A_{c,q})
\cong
\Aut_{\widetilde D}(A_{\widetilde c,\widetilde q}),
\quad
\sigma\longmapsto \Phi\sigma\Phi^{-1}.
\]
Thus, the study of isotropy groups of simple derivations is unchanged after passing to reduced form. So, we assume that $A_{c,q}$ is in reduced form and write
\[
n=\deg(c),
\quad
d=\deg_y(q).
\]

We use the fact that, when $n=\deg(c)\geq 2$, the Makar-Limanov invariant of
$A_{c,q}$ is $\ML(A_{c,q})=\K[\bar x]$ (see, \cite[Theorem 7]{BianchiVeloso}). In particular, every automorphism of $A_{c,q}$ preserves the subalgebra $\K[\bar x]$. This is one of the standard structural properties of Danielewski-type algebras; in the present form, we refer to \cite{ABEG}.

We denote by $\mathbb G_a$ and $\mathbb G_m$ the additive and multiplicative
algebraic groups over $\K$, respectively. Following \cite{ABEG}, we consider
the subgroup $G_{c,q}$ of $\mathbb G_m^2$, the $2$-dimensional algebraic torus over $\K$, defined by
\[
G_{c,q}
=
\left\{
(e,u)\in \mathbb G_m^2
\ \middle|\
c(ex)=e^n c(x),\ q(ex,uy)=u^d q(x,y)
\right\}.
\]

We also use the description of $\Aut(A_{c,q})$ obtained in \cite[Theorem 7]{ABEG}. In reduced form, every automorphism is described by a unipotent part and a diagonal part encoded by an element of $G_{c,q}$. In
particular, the unipotent subgroup is
\[
U_{\K}(A_{c,q})
=
\left\{
\exp(\ell(\bar x)\xi_{c,q})
\ \middle|\
\ell(x)\in\K[x]
\right\}.
\]
For such an automorphism, one has
\[
\exp(\ell(\bar x)\xi_{c,q})(\bar x)=\bar x
\quad \text{and} \quad 
\exp(\ell(\bar x)\xi_{c,q})(\bar y)
=
\bar y+c(\bar x)\ell(\bar x).
\]

The structural consequence of \cite[Theorem 7]{ABEG} is that there is a natural map
\[
\Aut(A_{c,q})\longrightarrow G_{c,q},
\]
whose kernel is $U_{\K}(A_{c,q})$. Equivalently, every automorphism can be
written as a product of a unipotent automorphism and an automorphism whose diagonal part is encoded by an element of $G_{c,q}$.

The following result from \cite[Theorem 8]{ABEG} provides structural motivation for the reduction to the diagonal group. It shows that, for non-locally nilpotent derivations, the isotropy group is controlled by the diagonal group $G_{c,q}$, up to a possible unipotent part.

\begin{theorem}[{\cite[Theorem 8]{ABEG}}]\label{thm:non-lnd-isotropy}
Let $D\in \Der(A_{c,q})$ be a non-locally nilpotent derivation. Then $\Aut_D(A_{c,q})$ is a linear algebraic group of dimension at most $3$. Moreover, it falls
into one of the following two cases:
\begin{enumerate}
    \item $\Aut_D(A_{c,q})$ is isomorphic to a closed subgroup of
    $G_{c,q}$;
    \item $\Aut_D(A_{c,q})$ is a semidirect product of a subgroup
    isomorphic to $\mathbb G_a$ and a closed subgroup of $G_{c,q}$.
\end{enumerate}
\end{theorem}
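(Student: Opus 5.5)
The plan is to reconstruct the argument of \cite[Theorem 8]{ABEG} from the structure of $\Aut(A_{c,q})$ in \cite[Theorem 7]{ABEG}, namely the exact sequence
\[
1\to U_{\K}(A_{c,q})\to \Aut(A_{c,q})\xrightarrow{\ \psi\ } G_{c,q}.
\]
First we embed $\Aut_D(A_{c,q})$ into the ind-group $\Aut(A_{c,q})=U_{\K}(A_{c,q})\rtimes T$, where $T$ is a diagonal subgroup isomorphic to (a subgroup of) $G_{c,q}$. Since $G_{c,q}\subseteq\mathbb G_m^2$ is closed, it has dimension at most $2$. Intersecting with $\ker\psi$ gives the exact sequence
\[
1\to \Aut_D\cap U_{\K}\to \Aut_D\to \psi(\Aut_D)\subseteq G_{c,q}.
\]
So everything reduces to controlling $N:=\Aut_D\cap U_{\K}$.

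The key step is to describe $N$. An element $\exp(\ell(\bar x)\xi)$ with $\xi=\xi_{c,q}$ commutes with $D$ exactly when $\ell(\bar x)\xi$ commutes with $D$; this uses the fact that $\ell\xi$ is locally nilpotent, and one can differentiate along the one-parameter family $t\ell$. In that case, for every $t$, $\exp(t\ell\xi)$ also lies in $N$.

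We then analyze the condition $[D,\ell(\bar x)\xi]=0$. Since $D$ preserves $\ML=\K[\bar x]$ only when it is compatible with it, we use the standard fact that a derivation commuting with an LND $\ell\xi$ maps $\ker(\ell\xi)=\K[\bar x]$ into itself. Comparing $\xi$-degrees then forces $D$ to have the shape $D=a(\bar x)\partial+b\,\xi$-type. From this we show that the set $L=\{\ell\in\K[x]: [D,\ell\xi]=0\}$ is a $\K$-vector space, and that if it has dimension at least $2$, then $D$ is itself locally nilpotent, contradicting the hypothesis. Concretely, if $\ell_1\xi$ and $\ell_2\xi$ both commute with $D$, then $D(\ell_1/\ell_2)=0$ with $\ell_1/\ell_2$ nonconstant in $\K(x)$. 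This puts $D(\bar x)=0$, and then $D$ is a $\K[\bar x]$-derivation commuting with $\xi$. We would then argue that such a $D$ is $f(\bar x)\xi$ plus terms that make it locally nilpotent, or else reach a contradiction. I expect this step, excluding $\dim L\geq2$ for non-LND $D$, to be the main obstacle. I would first attempt it by passing to $\K(x)\otimes A_{c,q}\cong\K(x)[y]$, where $\xi$ becomes $c\,\partial_y$ and the centralizer of $\partial_y$ consists of derivations $\alpha\partial_y+\beta\partial_x$-type. There, a derivation with $D(x)=0$ commuting with $\partial_y$ is $\alpha(x)\partial_y$, which is locally nilpotent.

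Consequently $N$ is either trivial or $\{\exp(t\ell_0\xi)\}\cong\mathbb G_a$. Next we show that $\Aut_D$ is closed in a finite-dimensional piece of the ind-group: its unipotent part is at most one-dimensional and its image under $\psi$ lies in $G_{c,q}$. This yields a linear algebraic group of dimension at most $1+2=3$. If $N=\{\id\}$, then $\psi$ restricts to an injection, giving case (1). If $N\cong\mathbb G_a$, we need a splitting. Since $\psi(\Aut_D)$ is a diagonalizable group, and such groups are linearly reductive, we conjugate by a unipotent element (Levi-type decomposition in the solvable group $\Aut_D$) to obtain a complement isomorphic to a closed subgroup of $G_{c,q}$. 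This gives the semidirect product in case (2).
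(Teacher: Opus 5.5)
The paper does not prove this statement; it quotes it from \cite[Theorem 8]{ABEG}, so your proposal has to be judged on its own. Several parts are sound.
\begin{itemize}
\item Your treatment of $N=\Aut_D(A_{c,q})\cap U_{\K}(A_{c,q})$ is correct. Your $\K(x)[y]$ computation already settles the step you flagged as the main obstacle (write $\xi=\xi_{c,q}$): from $\ell_2D(\ell_1)-\ell_1D(\ell_2)=0$ you get $D(\bar x)=0$, then $[D,\xi]=0$, then $D=h(x)\partial_y$, which is locally nilpotent. Hence $N$ is trivial or $\cong\mathbb G_a$.
\item One repair is needed there. Replace ``differentiate along $t\ell$'' by the observation that $t\mapsto\exp(t\ell\xi)D\exp(-t\ell\xi)=\sum_k\frac{t^k}{k!}(\operatorname{ad}\ell\xi)^k(D)$ is polynomial in $t$ and equals $D$ at every integer $t$.
\item The final splitting is fine once algebraicity is known: $N$ is then the unipotent radical, and Mostow's Levi decomposition applies.
\end{itemize}

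The genuine gap is the sentence asserting that $\Aut_D(A_{c,q})$ lies in a finite-dimensional piece of the ind-group because ``its unipotent part is at most one-dimensional and its image under $\psi$ lies in $G_{c,q}$''. Those facts control each fiber of $\psi$ separately. They say nothing about the degrees of the $\ell$ in $\sigma=\exp(\ell\xi)\tau$ as $\psi(\sigma)$ varies. Without a uniform bound you get neither algebraicity, nor closedness of $\psi(\Aut_D(A_{c,q}))$, nor the dimension count.

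This is not a formality. Take $c=x^n$, $q=y^d$, $\tau_\zeta(\bar x)=\zeta\bar x$, $\tau_\zeta(\bar y)=\bar y$, $r_k=k!$, $m_n=0$ and $m_{k+1}=m_k+x^{r_k-n}$. The groups $\exp(m_k\xi)\{\tau_\zeta\mid\zeta^{r_k}=1\}\exp(-m_k\xi)$ are nested. Their union meets $U_{\K}(A_{c,q})$ trivially and meets each bounded-degree piece in a finite set. Yet it is not an algebraic subgroup, since its image in $G_{c,q}$ is an infinite torsion group and hence not closed. So the non-local-nilpotence of $D$ must be used a second time.

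Here is one way to do it. Extend $D$ and $\sigma$ to $R=\K[x,c^{-1}][y]\supseteq A_{c,q}$; this is possible since $\sigma(c)=e^nc$. Write $\sigma(\bar x)=e\bar x$, $\sigma(\bar y)=u\bar y+f(\bar x)$ with $f=c\ell$, and set $\alpha=D(\bar x)$, $\beta=D(\bar y)$. Then $\sigma D=D\sigma$ reads
\[
\alpha(ex,uy+f)=e\,\alpha(x,y),\qquad \beta(ex,uy+f)=u\,\beta(x,y)+f'(x)\,\alpha(x,y).
\]
Degrees below are degrees at infinity in $\K(x)$.
\begin{itemize}
\item If $m=\deg_y\alpha\geq1$, comparing coefficients of $y^{m-1}$ gives $f=\bigl(e\,a_{m-1}(x)-u^{m-1}a_{m-1}(ex)\bigr)/\bigl(mu^{m-1}a_m(ex)\bigr)$. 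Hence $\deg f\leq\deg a_{m-1}-\deg a_m$.
\item If $\alpha\in\K[x,c^{-1}]$ and $\deg_y\beta\geq2$, the same coefficient comparison applied to $\beta$ bounds $\deg f$.
\item If $\alpha\in\K[x,c^{-1}]$ and $\deg_y\beta=1$, one gets $b_1f-\alpha f'=u\,b_0(x)-b_0(ex)$. A leading-term comparison bounds $\deg f$: when $\alpha\neq0$, the top terms cancel only if $\deg f=\operatorname{lc}(b_1)/\operatorname{lc}(\alpha)$.
\item If $\deg_y\beta\leq0$ and $\alpha\neq0$, then $f'=(\beta(ex)-u\beta(x))/\alpha$, which bounds $\deg f$.
\item The only remaining case is $\alpha=0$ and $\beta\in\K[x,c^{-1}]$. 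Then $D=\beta\partial_y$ is locally nilpotent, which is excluded by hypothesis.
\end{itemize}
Hence there is $k$ with $\deg\ell\leq k$ for all $\sigma\in\Aut_D(A_{c,q})$. So $\Aut_D(A_{c,q})$ is a Zariski-closed subgroup of $H_k=\exp(\K[x]_{\leq k}\xi)\rtimes T$. This $H_k$ is an algebraic group because $T$ acts by $\ell(x)\mapsto e^nu^{-1}\ell(ex)$, which preserves degree. After this, your dimension bound $1+2$ and the Levi splitting yield cases (1) and (2).
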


Let $A$ be a $\K$-algebra and let $D\in\Der(A)$. Recall that $D$ is called \textit{simple} if the only ideals $I\subseteq A$ such that $D(I)\subseteq I$ are $I=0$ and $I=A$. We denote by
\[
\Aut_D(A)
=
\{\sigma\in\Aut(A)\mid \sigma D=D\sigma\}
\]
the isotropy group of $D$.

We record the fixed-point argument that will be used throughout the paper. The key point is that, if $\sigma$ commutes with $D$, then the ideal generated by the elements $\sigma(a)-a$ is $D$-stable. For a simple derivation, this forces any element of the isotropy with a fixed point to be
the identity.

\begin{lemma}
Let $D\in\Der(A)$, and let $\sigma\in\Aut_D(A)$. Denote
\[
I_\sigma
:=
(\sigma(a)-a\mid a\in A).
\]

Then, $I_\sigma$ is a $D$-stable ideal of $A$.
\end{lemma}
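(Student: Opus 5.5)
Since $I_\sigma$ is an ideal by definition, the plan is to show $D(I_\sigma)\subseteq I_\sigma$, and to reduce this to the generators. Every element of $I_\sigma$ has the form $\sum_i b_i(\sigma(a_i)-a_i)$ with $a_i,b_i\in A$. By the Leibniz rule,
\[
D\Bigl(\sum_i b_i(\sigma(a_i)-a_i)\Bigr)=\sum_i D(b_i)(\sigma(a_i)-a_i)+\sum_i b_i\,D(\sigma(a_i)-a_i).
\]
The first sum already lies in $I_\sigma$. It therefore suffices to check that $D(\sigma(a)-a)\in I_\sigma$ for every $a\in A$.

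This is where the hypothesis $\sigma\in\Aut_D(A)$ enters. Since $\sigma D=D\sigma$ and $D$ is $\K$-linear,
\[
D(\sigma(a)-a)=D\sigma(a)-D(a)=\sigma(D(a))-D(a).
\]
The right-hand side is a generator of $I_\sigma$, namely $\sigma(a')-a'$ with $a'=D(a)\in A$. Hence $D$ maps each generator into $I_\sigma$, and by the Leibniz computation above we get $D(I_\sigma)\subseteq I_\sigma$.

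The argument is routine and has no real obstacle. The only points to watch are that $I_\sigma$ is the ideal generated by these elements rather than just their $\K$-span, which is why the Leibniz reduction to generators is needed, and that commutation with $D$ is used in the form $D\sigma=\sigma D$ applied to $a$. Invertibility of $\sigma$ is not needed, only that it is a $\K$-algebra endomorphism commuting with $D$.
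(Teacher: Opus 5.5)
Your proof is correct and is the same as the paper's. Commutation gives $D(\sigma(a)-a)=\sigma(D(a))-D(a)$, which is again a generator of $I_\sigma$, and the Leibniz rule together with linearity extends this to all of $I_\sigma$.
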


\begin{proof}
For every $a\in A$, we have $D(\sigma(a)-a)=D\sigma(a)-D(a)$. Since $\sigma D=D\sigma$, $D(\sigma(a)-a)
=\sigma D(a)-D(a)$. Thus, $D(\sigma(a)-a)\in I_\sigma$ and
$D$ sends the generators of $I_\sigma$ into $I_\sigma$. By the Leibniz rule, for every $b\in A$,
\[
D\bigl(b(\sigma(a)-a)\bigr)
=
D(b)(\sigma(a)-a)+bD(\sigma(a)-a)\in I_\sigma.
\]

Therefore, by linearity, we obtain $D(I_\sigma)\subseteq I_\sigma$.

\end{proof}

\begin{corollary}
Let $D \in \Der(A)$ be simple. If $\sigma\in\Aut_D(A)$, then either $\sigma=\id$ or $I_\sigma=A$.
\end{corollary}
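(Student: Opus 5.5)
The statement follows directly from the preceding Lemma, by applying the definition of simplicity to the ideal $I_\sigma$. Fix $\sigma\in\Aut_D(A)$. By the Lemma, $I_\sigma$ is a $D$-stable ideal of $A$. Since $D$ is simple, the only $D$-stable ideals are $0$ and $A$. Hence either $I_\sigma=0$ or $I_\sigma=A$.

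It remains to interpret the case $I_\sigma=0$. This ideal is generated by the elements $\sigma(a)-a$ for $a\in A$. So $I_\sigma=0$ holds exactly when $\sigma(a)=a$ for every $a\in A$, that is, when $\sigma=\id$. Combining the two cases gives the dichotomy: either $\sigma=\id$ or $I_\sigma=A$.

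There is no real obstacle. The only point to check is that the definition of simplicity used in the paper includes the zero ideal among the trivial ideals, so that the case $I_\sigma=0$ is allowed and leads to $\sigma=\id$. The definition recalled just before the Lemma states this explicitly.

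For later use, I would also record the geometric reading of the alternative $I_\sigma=A$. Suppose $\sigma$ fixes a maximal ideal $\mathfrak m$, meaning $\sigma(\mathfrak m)=\mathfrak m$. Then for every $a\in A$ we have $\sigma(a)-a\in\mathfrak m$: write $a=\lambda+m$ with $\lambda\in\K$ and $m\in\mathfrak m$, which is possible because $A/\mathfrak m=\K$ by the Nullstellensatz, and note $\sigma(\lambda)=\lambda$. Thus $I_\sigma\subseteq\mathfrak m\neq A$, and the Corollary forces $\sigma=\id$. This is the fixed-point criterion the paper uses later.
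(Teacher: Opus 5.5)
Your proof is correct and is the same as the paper's: the Lemma makes $I_\sigma$ a $D$-stable ideal, simplicity forces $I_\sigma\in\{0,A\}$, and $I_\sigma=0$ holds exactly when $\sigma=\id$. Your aside on fixed maximal ideals is also sound. It is slightly more careful than the paper's version, since it makes explicit that $A/\mathfrak m=\K$ is what gives $\sigma(a)-a\in\mathfrak m$; that fact holds for the finitely generated algebras over algebraically closed $\K$ used later in the paper.
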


\begin{proof}
This follows immediately from the previous lemma and from the simplicity of $D$.
\end{proof}

The geometric meaning is the following: if $D$ is simple and
$\sigma\neq \id$ belongs to $\Aut_D(A)$, then $\sigma$ fixes no maximal ideal of $A$. Indeed, if $\sigma$ fixed a maximal ideal $\mathfrak m$, then
\[
\sigma(a)-a\in \mathfrak m
\]
for every $a\in A$, and hence $I_\sigma\subseteq \mathfrak m$. Thus, $I_\sigma\neq A$, forcing $I_\sigma=0$, and therefore, $\sigma=\id$.

The preceding discussion gives the following fixed-point criterion:

\begin{proposition}\label{fixpoint} Let $D\in\Der(A)$ be simple and let $\sigma\in\Aut_D(A)$. If $\sigma$ fixes a maximal ideal of $A$, then $\sigma=\id$.
\end{proposition}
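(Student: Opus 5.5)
The plan is to reduce the statement to the Corollary above, which says that for a simple derivation $D$ and any $\sigma\in\Aut_D(A)$, either $\sigma=\id$ or $I_\sigma=A$. So it suffices to show that if $\sigma$ fixes a maximal ideal $\mathfrak m$, then $I_\sigma\neq A$.

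First we make precise what ``$\sigma$ fixes $\mathfrak m$'' should mean so that the argument goes through. If $A/\mathfrak m=\K$, which holds for maximal ideals of finitely generated $\K$-algebras such as $A_{c,q}$ since $\K$ is algebraically closed, then fixing $\mathfrak m$ means $\sigma(\mathfrak m)=\mathfrak m$. In that case $\sigma$ induces an automorphism $\bar\sigma$ of $A/\mathfrak m\cong\K$ as a $\K$-algebra, and the only such automorphism is the identity. Hence $\sigma(a)-a\in\mathfrak m$ for every $a\in A$.

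Concretely: write $a=\lambda+m$ with $\lambda\in\K$ and $m\in\mathfrak m$. Then $\sigma(a)-a=\sigma(m)-m\in\mathfrak m$, because $\sigma(\mathfrak m)=\mathfrak m$ and $\sigma$ fixes scalars. In general, without assuming the residue field is $\K$, we interpret ``fixes'' in the sense used in the discussion preceding the statement, namely $\sigma(a)-a\in\mathfrak m$ for all $a$. This is the only delicate point, and it is settled by the decomposition $A=\K\oplus\mathfrak m$.

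It follows that every generator of $I_\sigma$ lies in $\mathfrak m$, so $I_\sigma\subseteq\mathfrak m\subsetneq A$. By the Lemma above, $I_\sigma$ is $D$-stable. Simplicity of $D$, via the Corollary, then forces $I_\sigma=0$. That means $\sigma(a)=a$ for all $a\in A$, i.e.\ $\sigma=\id$.

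No step is a real obstacle. The only care needed is the residue-field identification $A/\mathfrak m=\K$, i.e.\ the Nullstellensatz, which ensures that $\sigma$ acts trivially modulo $\mathfrak m$.
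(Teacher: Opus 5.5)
Your proof is correct and follows the paper's argument: $\sigma(a)-a\in\mathfrak m$ for every $a$, so $I_\sigma\subseteq\mathfrak m\neq A$, and then simplicity forces $I_\sigma=0$, i.e.\ $\sigma=\id$. Your use of $A/\mathfrak m=\K$ (the Nullstellensatz, which is also what gives the decomposition $A=\K\oplus\mathfrak m$ you invoke) fills in a step the paper only asserts, and that step genuinely needs the hypothesis: on the field $\K(t)$ the derivation $D=d/dt$ is simple, $\sigma(t)=t+1$ commutes with it and preserves the maximal ideal $0$, yet $\sigma\neq\id$.
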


We record that simple derivations are not locally nilpotent. This places them in the range of Theorem \ref{thm:non-lnd-isotropy}.

\begin{lemma}
Let $A=A_{c,q}$. If $D\in\Der(A)$ is simple, then $D$ is not locally
nilpotent.
\end{lemma}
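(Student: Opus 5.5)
The plan is to argue by contradiction. Suppose $D$ is simple and locally nilpotent. The standard fact to use is that the kernel of a nonzero locally nilpotent derivation of an affine domain of dimension at least one is a proper subalgebra containing nonconstant elements. Here $A=A_{c,q}$ is a two-dimensional affine domain, since $c(x)z-q(x,y)$ is irreducible: $q$ is quasi-monic in $y$ of degree $d\geq 2$, so it is not divisible by any nonconstant factor of $c(x)$. I will produce a nonconstant element $f\in\ker D$ and then a proper nonzero $D$-stable ideal, contradicting simplicity.

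The first step is to dispose of the trivial case. If $D=0$, then every ideal is $D$-stable. Since $A$ has proper nonzero ideals (for instance any maximal ideal, which is nonzero because $\dim A=2$), $D$ is not simple. So we may assume $D\neq 0$.

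The second step produces a nonconstant element of the kernel. Since $D\neq 0$ is locally nilpotent, pick $a\in A$ with $D(a)\neq 0$. Let $m\geq 1$ be maximal with $D^m(a)\neq 0$, so that $D^{m+1}(a)=0$, and put $f=D^{m}(a)$ and $g=D^{m-1}(a)$. Then $D(f)=0$ and $D(g)=f\neq 0$. If $f$ were a constant $\lambda\in\K^\ast$, use instead a standard Makar-Limanov-type argument: $\ker D$ has transcendence degree $\dim A-1=1$ over $\K$, so it contains a nonconstant element. Alternatively, one can invoke $\ML(A_{c,q})=\K[\bar x]\subseteq\ker D$, which gives $\bar x\in\ker D$ directly and avoids this detour. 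I will take this route: $\bar x$ is nonconstant in $A$ because $A$ contains $\K[\bar x]$ as a polynomial ring.

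The third step builds the invariant ideal. With $f=\bar x\in\ker D$, for any $\lambda\in\K$ the ideal $I=(\bar x-\lambda)A$ is $D$-stable, since
\[
D\bigl(b(\bar x-\lambda)\bigr)=D(b)(\bar x-\lambda)\in I .
\]
The ideal $I$ is nonzero. It is proper because $A/(\bar x-\lambda)\cong\K[y,z]/(c(\lambda)z-q(\lambda,y))$, which is a nonzero ring: $q(\lambda,y)$ has degree $d\geq 1$ in $y$, so this quotient is not the zero ring in either case $c(\lambda)=0$ or $c(\lambda)\neq 0$. This contradicts the simplicity of $D$.

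The main obstacle is justifying that $\bar x\in\ker D$ for every locally nilpotent $D$. This is exactly the definition of the Makar-Limanov invariant as the intersection of kernels of all locally nilpotent derivations, combined with the cited fact $\ML(A_{c,q})=\K[\bar x]$ for $n\geq 2$. Once this is granted, the rest is routine.
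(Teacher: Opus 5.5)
Your proof is correct and is essentially the paper's argument: you use $\ML(A_{c,q})=\K[\bar x]$ to get $D(\bar x)=0$, so $(\bar x-\lambda)$ is a nonzero proper $D$-stable ideal. Your separate treatment of $D=0$ and the transcendence-degree detour are unnecessary. Your explicit check that $A/(\bar x-\lambda)\neq 0$ for every $\lambda$ fills in a detail the paper leaves implicit; the paper simply chooses $\lambda$ with $c(\lambda)\neq0$.
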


\begin{proof}
Suppose, by contradiction, that $D$ is locally nilpotent. Since $\operatorname{ML}(A)=\K[\bar x]$, we have $D(\bar x)=0$. Let $\lambda\in\K$ such that $c(\lambda)\neq 0$. Then, the ideal $I=(\bar x-\lambda)$ is $D$-stable, because
$D(\bar x-\lambda)=0$. Indeed, for every $a\in A$,
\[
D\bigl(a(\bar x-\lambda)\bigr)
=
D(a)(\bar x-\lambda)+aD(\bar x-\lambda)
=
D(a)(\bar x-\lambda)\in I.
\]

Therefore, $I$ is a nonzero proper $D$-stable ideal of $A$, contradicting the simplicity of $D$, which proves the claim.

\end{proof}

\section{Diagonal reduction and fixed-point criteria}\label{3}

Throughout this section, we assume that $A_{c,q}$ is in reduced form. Furthermore, by the structure theorem for the automorphism group \cite[Lemma 6 and Theorem 7]{ABEG}, we have the split exact sequence
\[
1
\longrightarrow
U_{\K}(A_{c,q})
\longrightarrow
\Aut(A_{c,q})
\overset{\psi}{\longrightarrow}
G_{c,q}
\longrightarrow
1,
\]
where $\ker\psi=U_{\K}(A_{c,q})$.

\begin{proposition}\label{thm:unipotent-part-trivial}
Let $D\in\Der(A_{c,q})$ be a simple derivation. Then
\[
\Aut_D(A_{c,q})\cap U_{\K}(A_{c,q})
=
\{\id\}.
\]
\end{proposition}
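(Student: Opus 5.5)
Let $\sigma=\exp(\ell(\bar x)\xi_{c,q})$ with $\ell\in\K[x]$ be an element of $\Aut_D(A_{c,q})\cap U_{\K}(A_{c,q})$. The plan is to show that $\sigma$ fixes a maximal ideal of $A_{c,q}$ and then apply Proposition \ref{fixpoint}. If $\ell=0$ then $\sigma=\id$ and there is nothing to prove, so assume $\ell\neq0$.

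The natural candidates for fixed points lie in the fibres $\bar x=\lambda$ with $c(\lambda)=0$. Since $\deg(c)=n\geq2$, such a root $\lambda\in\K$ exists because $\K$ is algebraically closed. On this fibre the unipotent automorphism acts trivially on $\bar y$ modulo the fibre ideal. Indeed,
\[
\sigma(\bar y)-\bar y=c(\bar x)\ell(\bar x)\in(\bar x-\lambda).
\]
We must also control $\bar z$. Since $\xi_{c,q}(\bar z)=\partial_yq(\bar x,\bar y)$, we get
\[
\sigma(\bar z)=\bar z+\ell(\bar x)\,\partial_y q(\bar x,\bar y)+\tfrac{\ell(\bar x)^2}{2}\,\xi_{c,q}\bigl(\partial_yq(\bar x,\bar y)\bigr)+\cdots .
\]
Because $\xi_{c,q}$ sends any polynomial in $\bar x,\bar y$ to $c(\bar x)$ times its $y$-derivative, all terms of order $\geq2$ lie in $(c(\bar x))\subseteq(\bar x-\lambda)$. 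Modulo $\bar x-\lambda$ we therefore have $\sigma(\bar z)\equiv \bar z+\ell(\lambda)\partial_yq(\lambda,\bar y)$.

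Next pick a maximal ideal $\mathfrak m=(\bar x-\lambda,\bar y-\mu,\bar z-\nu)$ of $A_{c,q}$. The point $(\lambda,\mu,\nu)$ lies on the surface exactly when $q(\lambda,\mu)=0$, and then $\nu$ is arbitrary. To have $\sigma(\mathfrak m)=\mathfrak m$ it suffices that $\sigma(\bar x)-\bar x$, $\sigma(\bar y)-\bar y$ and $\sigma(\bar z)-\bar z$ all lie in $\mathfrak m$; then $\sigma(a)-a\in\mathfrak m$ for all $a$, since the $\sigma(a)-a$ generate $I_\sigma$ as an ideal via the identity $\sigma(ab)-ab=\sigma(a)(\sigma(b)-b)+(\sigma(a)-a)b$. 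The first two conditions hold automatically. For the third we need
\[
\ell(\lambda)\,\partial_yq(\lambda,\mu)=0 .
\]
So we must choose $\mu$ with $q(\lambda,\mu)=0$ and either $\ell(\lambda)=0$ or $\partial_yq(\lambda,\mu)=0$.

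This choice of $\mu$ is the main obstacle, since for generic $(c,q)$ the polynomial $q(\lambda,y)$ has simple roots. I would avoid the issue by a cleaner global argument: $I_\sigma$ is contained in the ideal
\[
J=\bigl(c(\bar x)\ell(\bar x),\ \ell(\bar x)\partial_yq(\bar x,\bar y)\bigr).
\]
This follows from the formula for $\sigma(\bar z)$, noting that the higher-order terms are multiples of $\ell(\bar x)^2 c(\bar x)$. The ideal $J$ is contained in $(\ell(\bar x))$ whenever $\ell$ is nonconstant. In that case $I_\sigma\subseteq(\bar x-\alpha)$ for a root $\alpha$ of $\ell$, and $(\bar x-\alpha)$ is proper. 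The corollary then forces $I_\sigma=0$, that is, $\sigma=\id$.

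If $\ell$ is a nonzero constant, then $\sigma=\exp(\ell\xi_{c,q})$. We have $I_\sigma\subseteq(c(\bar x),\partial_yq(\bar x,\bar y))$. Moreover $\bar z$ itself satisfies $\sigma(\bar z)-\bar z\equiv\ell\,\partial_yq$ modulo $c$. So $\sigma$ fixes a point exactly when $c(\lambda)=0=\partial_yq(\lambda,\mu)$ for some $\mu$ with $q(\lambda,\mu)=0$, and this can fail. Here I would instead use that $D$ commutes with $\sigma=\exp(\ell\xi)$. Differentiating along the one-parameter group, or using the fact that $\sigma$ is unipotent so that $\log\sigma=\ell\xi$ commutes with $D$, gives $[D,\xi_{c,q}]=0$. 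Then $D(\ker\xi_{c,q})\subseteq\ker\xi_{c,q}$, so $D(\bar x)=f(\bar x)$ for some $f\in\K[x]$. If $f$ has a root $\beta$ then $(\bar x-\beta)$ is a proper $D$-stable ideal. If $f$ is a nonzero constant I would use $\xi(D\bar y)=D(c(\bar x))=c'(\bar x)f$ and degree considerations in the $\xi$-filtration. That would show $c(\bar x)$ generates, together with a suitable element, a proper $D$-stable ideal. This last subcase is the step I expect to need the most care.
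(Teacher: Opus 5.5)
Your argument for nonconstant $\ell$ is correct, and it is a genuinely different, fixed-point-style route from the paper's. Since $\sigma(\bar y)-\bar y$ and $\sigma(\bar z)-\bar z$ are multiples of $\ell(\bar x)$, $\sigma$ is the identity on the fibre over a root $\alpha$ of $\ell$. Hence $I_\sigma\subseteq(\bar x-\alpha)\neq A_{c,q}$, and the corollary applies. For $\ell\in\K^\ast$, your reduction to $[D,\xi_{c,q}]=0$ via $\log\sigma$ is also fine: on each element $\log\sigma$ is a finite sum of powers of $\sigma-1$, and these commute with $D$. The subcase where $D(\bar x)=f(\bar x)$ has a root is fine too. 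The gap is the remaining subcase $f=f_0\in\K^\ast$, which you leave open. Moreover, the strategy you sketch there cannot succeed. If $D(\bar x)=f_0\neq0$, then every $D$-stable ideal containing $c(\bar x)$ contains $D^n(c(\bar x))=n!\,c_nf_0^n\in\K^\ast$. So no proper $D$-stable ideal contains $c(\bar x)$, with or without an additional generator.

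The missing idea is that this subcase cannot occur.

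\begin{itemize}
\item From $\xi_{c,q}(D\bar y)=D(c(\bar x))=f\,c'(\bar x)$ you get $\xi_{c,q}^2(D\bar y)=0$.
\item Consider the $\K[\bar x]$-basis $\{\bar y^j\bar z^k : 0\le j<d,\ k\geq0\}$ of $A_{c,q}$. After inverting $c(\bar x)$, where $\xi_{c,q}=c\,\partial_{\bar y}$, its elements have pairwise distinct $\xi_{c,q}$-degrees $j+dk$. Since $d\geq2$, this gives $\ker\xi_{c,q}^2=\K[\bar x]\oplus\K[\bar x]\bar y$.
\item Hence $D(\bar y)=a(\bar x)\bar y+b(\bar x)$. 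This is exactly the normal form the paper imports from \cite[Lemma 11]{ABEG}. Applying $\xi_{c,q}$ gives $a\,c=f\,c'$.
\item Cancel the multiplicity at a root $\lambda$ of $c$ and evaluate: this gives $f(\lambda)=0$. In particular $f\in\K^\ast$ is excluded, since $0\neq c'$ has degree $<\deg(c)$.
\end{itemize}

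So $(\bar x-\lambda)$ is a proper nonzero $D$-stable ideal, contradicting simplicity.

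The paper runs this computation once for all $\ell$. Comparing $\sigma D(\bar y)$ with $D\sigma(\bar y)$ gives $g\,(\ell c)'=a\,\ell c$, where $g=D(\bar x)$. Since $\ell c$ is always nonconstant, any root of $\ell c$ is a root of $g$. No case split and no fixed-point argument is needed.
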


\begin{proof}
Suppose that there exists a nontrivial element $\sigma\in \Aut_D(A_{c,q})\cap U_{\K}(A_{c,q})$. Then, we write $\sigma=\exp(\eta)$, with $\eta=\ell(\bar x)\xi_{c,q}$ for some nonzero $\ell(x)\in\K[x]$. Since $\sigma$ commutes with $D$, \cite[Lemma 11]{ABEG} implies that there exist $g(x),a(x),b(x)\in\K[x]$ such that
\[
D(\bar x)=g(\bar x) \quad \text{and} \quad D(\bar y)=a(\bar x)\bar y+b(\bar x).
\]

We use again that $\sigma D=D\sigma$ and $\sigma(\bar y)=\bar y+\ell(\bar x)c(\bar x)$, we have
\[
\sigma D(\bar y)=D\sigma(\bar y).
\]

The left-hand side is
\[
\sigma\bigl(a(\bar x)\bar y+b(\bar x)\bigr)
=
a(\bar x)\bar y+b(\bar x)+a(\bar x)\ell(\bar x)c(\bar x),
\]
and the right-hand side is
\[
D\bigl(\bar y+\ell(\bar x)c(\bar x)\bigr)
=
a(\bar x)\bar y+b(\bar x)+g(\bar x)(\ell c)'(\bar x).
\]

Therefore, $g(x)(\ell c)'(x)=a(x)\ell(x)c(x)$. We denote $F(x)=\ell(x)c(x)$. Since $F$ is nonconstant, $F$ has a root $\lambda\in\K$. Then, $g(\lambda)=0$. Thus,
\[
D(\bar x-\lambda)=g(\bar x)\in(\bar x-\lambda).
\]

Consequently, the nonzero proper ideal $I=(\bar x-\lambda)$ is $D$-stable, contradicting the simplicity of $D$.

\end{proof}

We use the canonical homomorphism $\psi:\Aut(A_{c,q})\longrightarrow G_{c,q}$ to obtain the following immediate consequence.

\begin{corollary}\label{cor:isotropy-injects-Gcq}
Let $D\in\Der(A_{c,q})$ be a simple derivation. Then the restriction
\[
\psi|_{\Aut_D(A_{c,q})}:
\Aut_D(A_{c,q})\longrightarrow G_{c,q}
\]
is injective. In particular, $\Aut_D(A_{c,q})$ is isomorphic to a subgroup of $G_{c,q}$.
\end{corollary}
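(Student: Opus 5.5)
The plan is to obtain the corollary directly from the split exact sequence
\[
1\longrightarrow U_{\K}(A_{c,q})\longrightarrow \Aut(A_{c,q})\overset{\psi}{\longrightarrow} G_{c,q}\longrightarrow 1
\]
together with Proposition \ref{thm:unipotent-part-trivial}. Since $\psi$ is a group homomorphism, its restriction to the subgroup $\Aut_D(A_{c,q})\subseteq\Aut(A_{c,q})$ is again a homomorphism. Its kernel is
\[
\ker\bigl(\psi|_{\Aut_D(A_{c,q})}\bigr)=\Aut_D(A_{c,q})\cap\ker\psi=\Aut_D(A_{c,q})\cap U_{\K}(A_{c,q}).
\]

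Next I would invoke Proposition \ref{thm:unipotent-part-trivial}, which applies because $D$ is simple. It shows that this intersection is $\{\id\}$. A group homomorphism with trivial kernel is injective, so $\psi|_{\Aut_D(A_{c,q})}$ is injective. Therefore $\Aut_D(A_{c,q})\cong\psi(\Aut_D(A_{c,q}))$, which is a subgroup of $G_{c,q}$.

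There is no real obstacle here, since the substantive work was already done in the preceding proposition. The only points to check are bookkeeping ones. First, we work in reduced form, which is harmless because conjugation by the isomorphism $\Phi$ identifies the isotropy groups. Second, the kernel of $\psi$ is exactly $U_{\K}(A_{c,q})$, as recorded from \cite[Theorem 7]{ABEG}. Optionally, one can also note that this is consistent with Theorem \ref{thm:non-lnd-isotropy}: simple derivations are not locally nilpotent, and the $\mathbb G_a$ factor of case (2) is excluded by the trivial intersection with the unipotent subgroup. The argument does not need this remark.
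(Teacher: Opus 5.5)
Your proposal is correct and is essentially the paper's own argument. The kernel of $\psi|_{\Aut_D(A_{c,q})}$ is $\Aut_D(A_{c,q})\cap U_{\K}(A_{c,q})$, which is trivial by Proposition~\ref{thm:unipotent-part-trivial}; your extra remarks on reduced form and on Theorem~\ref{thm:non-lnd-isotropy} are harmless but not needed.
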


\begin{proof}
Since $\ker\psi=U_{\K}(A_{c,q})$, we obtain
\[
\ker\bigl(\psi|_{\Aut_D(A_{c,q})}\bigr)
=
\Aut_D(A_{c,q})\cap U_{\K}(A_{c,q}).
\]

By Proposition \ref{thm:unipotent-part-trivial},
\[
\Aut_D(A_{c,q})\cap U_{\K}(A_{c,q})
=
\{\id\}.
\]

Therefore, $\ker\bigl(\psi|_{\Aut_D(A_{c,q})}\bigr)=\{\id\}$, and the restriction is injective.
\end{proof}

\begin{corollary}\label{cor:Gcq-trivial-implies-isotropy-trivial}
Let $D\in\Der(A_{c,q})$ be simple. If $G_{c,q}=\{1\}$, then
\[
\Aut_D(A_{c,q})=\{\id\}.
\]
\end{corollary}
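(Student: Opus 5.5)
This is a direct consequence of Corollary \ref{cor:isotropy-injects-Gcq}. Since $D$ is simple, the restriction $\psi|_{\Aut_D(A_{c,q})}:\Aut_D(A_{c,q})\to G_{c,q}$ is injective. If $G_{c,q}=\{1\}$, then every $\sigma\in\Aut_D(A_{c,q})$ satisfies $\psi(\sigma)=1=\psi(\id)$, and injectivity gives $\sigma=\id$.

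Equivalently, we can argue through the kernel directly. When $G_{c,q}$ is trivial, $\ker\psi$ is all of $\Aut(A_{c,q})$, so the exact sequence shows $\Aut(A_{c,q})=U_{\K}(A_{c,q})$. Consequently
\[
\Aut_D(A_{c,q})=\Aut_D(A_{c,q})\cap U_{\K}(A_{c,q}),
\]
and the right-hand side equals $\{\id\}$ by Proposition \ref{thm:unipotent-part-trivial}.

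The argument has no real obstacle. The only point to check is that we work in reduced form, so that the split exact sequence from \cite[Theorem 7]{ABEG} applies; this is the standing assumption of the section. The substantive work is instead in verifying the hypothesis $G_{c,q}=\{1\}$ for generic $(c,q)$, which the paper handles in Section \ref{4}.
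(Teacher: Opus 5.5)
Your proposal is correct and matches the paper's proof, which likewise deduces the statement immediately from the injectivity of $\psi|_{\Aut_D(A_{c,q})}$ in Corollary \ref{cor:isotropy-injects-Gcq}. Your second, kernel-based argument is just that corollary's proof written out, so it adds nothing essentially new but is equally valid.
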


\begin{proof}
This follows immediately from Corollary \ref{cor:isotropy-injects-Gcq}.
\end{proof}

The previous results show that the only possible nontrivial elements of $\Aut_D(A_{c,q})$ must project to nontrivial elements of $G_{c,q}$ under $\psi$. The following criterion explains how to rule out such elements using
fixed points.

\begin{proposition}\label{prop:fixed-point-criterion-via-psi}
Let $D\in\Der(A_{c,q})$ be a simple derivation. Suppose that every automorphism $\sigma\in\Aut(A_{c,q})$ such that $\psi(\sigma)\neq(1,1)$ fixes a maximal ideal of $A_{c,q}$. Then, 
\[\Aut_D(A_{c,q})=\{\id\}.\]
\end{proposition}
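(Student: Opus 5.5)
The plan is to take an arbitrary $\sigma\in\Aut_D(A_{c,q})$ and split into two cases according to the value of $\psi(\sigma)\in G_{c,q}$. In each case we show $\sigma=\id$, using results already established: Proposition \ref{thm:unipotent-part-trivial} (or Corollary \ref{cor:isotropy-injects-Gcq}) and the fixed-point criterion, Proposition \ref{fixpoint}.

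\emph{First case: $\psi(\sigma)=(1,1)$.} Then $\sigma\in\ker\psi=U_{\K}(A_{c,q})$, so $\sigma\in\Aut_D(A_{c,q})\cap U_{\K}(A_{c,q})$. Proposition \ref{thm:unipotent-part-trivial} says this intersection is trivial, hence $\sigma=\id$. Equivalently, Corollary \ref{cor:isotropy-injects-Gcq} gives injectivity of $\psi$ on $\Aut_D(A_{c,q})$.

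\emph{Second case: $\psi(\sigma)\neq(1,1)$.} By hypothesis, $\sigma$ fixes some maximal ideal $\mathfrak m$ of $A_{c,q}$. Since $D$ is simple and $\sigma\in\Aut_D(A_{c,q})$, Proposition \ref{fixpoint} forces $\sigma=\id$. But $\psi(\id)=(1,1)$, contradicting $\psi(\sigma)\neq(1,1)$. So this case cannot occur.

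Combining the two cases, every element of $\Aut_D(A_{c,q})$ is the identity, which is the claim. There is no real obstacle here: the argument is purely formal bookkeeping, since the substantive inputs (the split exact sequence with kernel $U_{\K}(A_{c,q})$, the triviality of the unipotent part of the isotropy, and the fixed-point lemma for simple derivations) have all been proved earlier. The only point to state explicitly is that the hypothesis concerns all automorphisms with nontrivial image under $\psi$, so it applies in particular to elements of the isotropy group. The genuine difficulty lies in verifying that hypothesis for concrete families, which is deferred to later sections.
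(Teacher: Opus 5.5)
Your proposal is correct and matches the paper's proof: the case $\psi(\sigma)=(1,1)$ is settled by Proposition \ref{thm:unipotent-part-trivial}, and the case $\psi(\sigma)\neq(1,1)$ by the hypothesis together with Proposition \ref{fixpoint}. Your extra remark that the second case is therefore vacuous, since $\psi(\id)=(1,1)$, is harmless.
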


\begin{proof}
Let $\sigma\in\Aut_D(A_{c,q})$. If $\psi(\sigma)=(1,1)$, then
$\sigma\in\ker\psi=U_{\K}(A_{c,q})$. Thus, $\sigma\in\Aut_D(A_{c,q})\cap U_{\K}(A_{c,q})$. By Proposition \ref{thm:unipotent-part-trivial}, we obtain $\sigma=\id$.

Suppose that $\psi(\sigma)\neq(1,1)$. By hypothesis, $\sigma$ fixes a maximal ideal of $A_{c,q}$. Since $\sigma\in\Aut_D(A_{c,q})$ and $D$ is simple, the Proposition \ref{fixpoint} implies that $\sigma=\id$. Therefore, 
we obtain the desired result.
\end{proof}

\section{The generic case}\label{4}

In this section, we obtain a criterion for the triviality of the diagonal group $G_{c,q}$ in terms of the monomials appearing in $c$ and $q$. Recall that, throughout the paper, $A_{c,q}$ is assumed to be in reduced form. In particular, $\deg_x(q)<\deg(c)$ and also we write
\[
n=\deg(c),
\quad
d=\deg_y(q).
\]

Let us also recall what a \textit{general} reduced pair  means. Fix integers $n\geq2$ and $d\geq2$. Since $c$ is centered, we may write
\[
c(x)=\sum_{r=0}^{n}c_rx^r,
\quad
c_n\neq0,
\quad
c_{n-1}=0.
\]

Since $q$ is quasi-monic of degree $d$ with respect to $y$, centered as a polynomial in $\K[x,y]$, and satisfies $\deg_x(q)<n$, we may write
\[
q(x,y)
=
\lambda y^d
+
\sum_{\substack{0\leq i<n\\0\leq j\leq d-2}}
q_{ij}x^iy^j,
\quad
\lambda\in\K^\ast.
\]

Thus, for fixed $n$ and $d$, the pairs form a finite-dimensional parameter space with coordinates given by the coefficients of $c$ and $q$, subject to
$c_n\neq0$ and $\lambda\neq0$. When we say that a property holds for a \textit{general} reduced pair $(c,q)$, we
mean that it holds on a nonempty Zariski open subset of this parameter space.

In reduced form, the diagonal group is
\[
G_{c,q}
=
\left\{
(e,u)\in \mathbb G_m^2
\ \middle|\
c(ex)=e^n c(x),\ q(ex,uy)=u^d q(x,y)
\right\}.
\]

With the notation above, let $\Lambda(c,q)$ denote the subgroup of $\mathbb Z^2$ generated by the vectors $(n-r,0)$, for all $r<n$ such that $c_r\neq0$, together with the vectors $(i,j-d)$, for all pairs $(i,j)$ such that $q_{ij}\neq0$.

\begin{proposition}\label{prop:lattice-criterion-Gcq}
If $\Lambda(c,q)=\mathbb Z^2$, then $G_{c,q}=\{1\}$.
\end{proposition}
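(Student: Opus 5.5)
The plan is to translate membership in $G_{c,q}$ into monomial conditions. Each generator of $\Lambda(c,q)$ will yield one character equation satisfied by every $(e,u)\in G_{c,q}$. Generation of all of $\mathbb Z^2$ will then force $e=u=1$.

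Let $(e,u)\in G_{c,q}$. I would first expand the identity $c(ex)=e^nc(x)$ coefficientwise. This gives $c_re^r=e^nc_r$ for every $r$. Hence for each $r<n$ with $c_r\neq0$ we obtain $e^{n-r}=1$, which we read as $e^{n-r}u^{0}=1$. Next I would expand $q(ex,uy)=u^dq(x,y)$, using the explicit form
\[
q(x,y)=\lambda y^d+\sum_{i<n,\ j\leq d-2}q_{ij}x^iy^j .
\]
The coefficient of $y^d$ gives $\lambda u^d=u^d\lambda$, which is automatic. The coefficient of $x^iy^j$ gives $q_{ij}e^iu^j=u^dq_{ij}$. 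So for each pair with $q_{ij}\neq0$ we get $e^iu^{j-d}=1$. Here $u\in\K^\ast$, so negative exponents make sense.

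Now I would consider the group homomorphism
\[
\chi_{(e,u)}:\mathbb Z^2\to\K^\ast,\qquad (a,b)\mapsto e^au^b .
\]
The previous paragraph shows that $\chi_{(e,u)}$ kills every generator of $\Lambda(c,q)$. Since its kernel is a subgroup, it kills all of $\Lambda(c,q)=\mathbb Z^2$. Evaluating at $(1,0)$ and $(0,1)$ gives $e=1$ and $u=1$. Hence $G_{c,q}=\{1\}$.

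There is essentially no obstacle here. The only points needing care are the following. First, the monomials $x^iy^j$ with distinct $(i,j)$ are linearly independent, so the coefficient comparison is legitimate. Second, the reduced form guarantees that the only monomial of $q$ with $j=d$ is $\lambda y^d$. Indeed, $q$ is quasi-monic of degree $d$ in $y$ and centered, so no $y^{d-1}$ terms appear and no other $x^iy^d$ terms occur. This is why $\lambda y^d$ contributes no constraint and is correctly omitted from the generators.
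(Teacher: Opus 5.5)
Your proposal is correct and is essentially the paper's proof: comparing coefficients gives $e^{a}u^{b}=1$ for each generator $(a,b)$ of $\Lambda(c,q)$, and since $(1,0)$ and $(0,1)$ lie in $\Lambda(c,q)$, it follows that $e=u=1$. Your character $(a,b)\mapsto e^{a}u^{b}$ packages more cleanly the paper's explicit product computation over integer combinations of generators.
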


\begin{proof}
Let $(e,u)\in G_{c,q}$. Then,
\[
c(ex)=e^n c(x)
\quad \text{and} \quad
q(ex,uy)=u^d q(x,y).
\]

From the first identity, for every monomial $x^r$ appearing in $c(x)$ with $r<n$, we obtain $e^r=e^n$, and then $e^{n-r}=1$. From the second identity, for every monomial $x^iy^j$ appearing in $q(x,y)$, we obtain $e^iu^j=u^d$, and then $e^iu^{j-d}=1$. 

Thus, for every generator $(a,b)$ of $\Lambda(c,q)$, we have $e^a u^b=1$. Since $\Lambda(c,q)=\mathbb Z^2$, there exist generators $(a_1,b_1),\dots,(a_s,b_s)$
of $\Lambda(c,q)$ and integers $m_1,\dots,m_s\in\mathbb Z$
such that
\[
(1,0)=\sum_{k=1}^s m_k(a_k,b_k).
\]

Therefore,
\[
e
=
e^1u^0
=
e^{\sum_{k=1}^s m_ka_k}
u^{\sum_{k=1}^s m_kb_k}
=
\prod_{k=1}^s
\left(e^{a_k}u^{b_k}\right)^{m_k}
=
1.
\]

Similarly, there exist integers $n_1,\dots,n_s\in\mathbb Z$
such that $(0,1)=\sum_{k=1}^s n_k(a_k,b_k)$.
Thus,
\[
u
=
e^0u^1
=
e^{\sum_{k=1}^s n_ka_k}
u^{\sum_{k=1}^s n_kb_k}
=
\prod_{k=1}^s
\left(e^{a_k}u^{b_k}\right)^{n_k}
=
1.
\]

Therefore, $(e,u)=(1,1)$ and then, $G_{c,q}=\{1\}$.
\end{proof}

\begin{corollary}\label{cor:lattice-trivial-isotropy}
If $\Lambda(c,q)=\mathbb Z^2$, then every simple derivation $D\in\Der_{\K}(A_{c,q})$ satisfies
\[
\Aut_D(A_{c,q})=\{\id\}.
\]
\end{corollary}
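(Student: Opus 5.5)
The plan is to combine Proposition \ref{prop:lattice-criterion-Gcq} with Corollary \ref{cor:Gcq-trivial-implies-isotropy-trivial}. No new structural input is needed; the content is entirely in those two earlier results.

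First, assume $\Lambda(c,q)=\mathbb Z^2$. Proposition \ref{prop:lattice-criterion-Gcq} then gives $G_{c,q}=\{1\}$. Next, let $D\in\Der_{\K}(A_{c,q})$ be an arbitrary simple derivation. By Corollary \ref{cor:isotropy-injects-Gcq}, which rests on Proposition \ref{thm:unipotent-part-trivial} showing that the isotropy group meets $U_{\K}(A_{c,q})$ trivially, the restriction $\psi|_{\Aut_D(A_{c,q})}$ is injective into $G_{c,q}$. Since the target is trivial, $\Aut_D(A_{c,q})=\{\id\}$. This is exactly the content of Corollary \ref{cor:Gcq-trivial-implies-isotropy-trivial}, so the proof amounts to chaining these two statements.

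I do not expect a real obstacle. The only point to check is the standing convention: $A_{c,q}$ is assumed to be in reduced form, and both $\Lambda(c,q)$ and the description of $G_{c,q}$ are defined in that form. This is harmless, because by the discussion in Section \ref{Preliminares} passing to reduced form conjugates isotropy groups isomorphically and preserves simplicity. For that reason I would state the corollary, and prove it, for reduced pairs only.
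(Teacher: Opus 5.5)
Your proposal is correct and is essentially the paper's own proof. Proposition \ref{prop:lattice-criterion-Gcq} gives $G_{c,q}=\{1\}$, and Corollary \ref{cor:Gcq-trivial-implies-isotropy-trivial}, resting on the injectivity of $\psi|_{\Aut_D(A_{c,q})}$, then forces $\Aut_D(A_{c,q})=\{\id\}$, with the reduced-form convention handled just as you describe.
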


\begin{proof}
By Proposition \ref{prop:lattice-criterion-Gcq}, we have $G_{c,q}=\{1\}$. Therefore, the result follows from
Corollary \ref{cor:Gcq-trivial-implies-isotropy-trivial}.
\end{proof}

Assume that $d\geq3$. We denote by $\Omega_{n,d}$ the subset of the parameter space of reduced pairs of fixed degrees $(n,d)$ given by
\[
\Omega_{n,d}
=
\left\{
(c,q)
\ \middle|\
q_{00}q_{01}q_{10}\neq0
\right\}.
\]

We use the lattice criterion to exhibit an explicit nonempty Zariski open subset in degree $d\geq3$:

\begin{proposition}\label{prop:generic-full-lattice}
The subset $\Omega_{n,d}$ is a nonempty Zariski open subset. Moreover, for every $(c,q)\in\Omega_{n,d}$, we have
\[
\Lambda(c,q)=\mathbb Z^2.
\]
\end{proposition}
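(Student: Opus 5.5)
The plan has two parts: first show that $\Omega_{n,d}$ is open and nonempty, then read off $\Lambda(c,q)=\mathbb Z^2$ from three explicit generators.

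\emph{Openness and nonemptiness.} The parameter space of reduced pairs of fixed degrees $(n,d)$ is the open subset $\{c_n\lambda\neq0\}$ of an affine space. Its coordinates are the coefficients $c_r$ for $r\neq n-1$, the coefficient $\lambda$, and the $q_{ij}$ with $0\leq i<n$ and $0\leq j\leq d-2$. Since $n\geq2$ and $d\geq3$, the indices $(0,0)$, $(0,1)$ and $(1,0)$ all satisfy $i<n$ and $j\leq d-2$. So $q_{00}$, $q_{01}$ and $q_{10}$ are genuine coordinate functions, and we need $d\geq3$ precisely so that $j=1\leq d-2$. Hence $\Omega_{n,d}$ is the complement of the hypersurface $q_{00}q_{01}q_{10}=0$, which is Zariski open. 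It is nonempty because, for instance, $c(x)=x^n$ and $q=y^d+xy... $ is not needed: take $q(x,y)=y^d+y+x+1$. This $q$ is centered since $d\geq3$, and it satisfies $\deg_x(q)=1<n$. The parameter space is irreducible, being an open subset of affine space, so $\Omega_{n,d}$ is in fact dense.

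\emph{Lattice computation.} For $(c,q)\in\Omega_{n,d}$, the definition of $\Lambda(c,q)$ supplies three generators:
\begin{itemize}
\item from $q_{00}\neq0$, the vector $(0,-d)$;
\item from $q_{01}\neq0$, the vector $(0,1-d)$;
\item from $q_{10}\neq0$, the vector $(1,-d)$.
\end{itemize}
Then $(0,1-d)-(0,-d)=(0,1)\in\Lambda(c,q)$. Next, $(1,-d)+d(0,1)=(1,0)\in\Lambda(c,q)$. Since $(1,0)$ and $(0,1)$ generate $\mathbb Z^2$, we get $\Lambda(c,q)=\mathbb Z^2$.

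There is no real obstacle here. The only point requiring care is checking that the three indices are admissible in reduced form, which uses $n\geq2$ and $d\geq3$. Combining this proposition with Corollary~\ref{cor:lattice-trivial-isotropy} then yields generic triviality of the isotropy group when $d\geq3$.
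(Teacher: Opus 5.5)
Your proposal is correct and follows the paper's own argument: the monomials $1,y,x$ contribute $(0,-d),(0,1-d),(1,-d)$, and your subtraction $(0,1-d)-(0,-d)=(0,1)$ is exactly the paper's observation that $\gcd(d,d-1)=1$, after which $(1,-d)$ completes the generation of $\mathbb Z^2$. Your additions fill in what the paper leaves implicit: you check that $(0,0),(0,1),(1,0)$ are admissible indices when $n\geq2$ and $d\geq3$, and you give the explicit witness $c=x^n$, $q=y^d+y+x+1$ for nonemptiness; only the garbled sentence introducing that witness needs cleaning up.
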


\begin{proof}
The condition $q_{00}q_{01}q_{10}\neq0$ defines a nonempty Zariski open subset of the parameter space. Let $(c,q)\in\Omega_{n,d}$. Then, the monomials $1, y$ and $x$
appear in $q(x,y)$ with nonzero coefficients.

The monomial $1=x^0y^0$ contributes with the vector $(0,-d)$,
and the monomial $y=x^0y^1$ contributes with the vector $(0,1-d)$. Since $\gcd(d,d-1)=1$, these two vectors generate
$\{0\}\times\mathbb Z$. The monomial $x=x^1y^0$ contributes with the vector $(1,-d)$. Together with $\{0\}\times\mathbb Z$, this vector generates the whole lattice $\mathbb Z^2$. Thus, $\Lambda(c,q)=\mathbb Z^2$. By Proposition \ref{prop:lattice-criterion-Gcq}, we conclude that $G_{c,q}=\{1\}$.
\end{proof}

\begin{corollary}\label{cor:generic-trivial-isotropy}
Let $d=\deg_y(q)\geq3$. For every reduced pair
$(c,q)\in\Omega_{n,d}$, every simple derivation $
D\in\Der_{\K}(A_{c,q})$ satisfies
\[
\Aut_D(A_{c,q})=\{\id\}.
\]
\end{corollary}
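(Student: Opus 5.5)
The plan is to chain together results already established. Let $d\geq3$, let $(c,q)\in\Omega_{n,d}$ be a reduced pair, and let $D\in\Der_{\K}(A_{c,q})$ be simple.

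First, I will apply Proposition \ref{prop:generic-full-lattice}. Since $q_{00}q_{01}q_{10}\neq0$, the vectors $(0,-d)$, $(0,1-d)$ and $(1,-d)$ lie in $\Lambda(c,q)$. Here $d\geq3$ guarantees that the monomials $1$ and $y$ have exponents $j\leq d-2$, so they are honest coordinates $q_{00},q_{01}$ of the reduced parameter space. The monomial $x$ is admissible because $1<n$, as $n\geq2$. These three vectors generate $\mathbb Z^2$, so $\Lambda(c,q)=\mathbb Z^2$.

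Second, I will invoke Corollary \ref{cor:lattice-trivial-isotropy}, which yields $\Aut_D(A_{c,q})=\{\id\}$. Unwinding it, Proposition \ref{prop:lattice-criterion-Gcq} gives $G_{c,q}=\{1\}$. Corollary \ref{cor:isotropy-injects-Gcq}, which rests on Proposition \ref{thm:unipotent-part-trivial}, gives the injection $\Aut_D(A_{c,q})\hookrightarrow G_{c,q}$. Together these force the isotropy group to be trivial.

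No step is a genuine obstacle, since the statement is a direct corollary. The only point needing care is bookkeeping: the hypothesis $d\geq3$ is exactly what makes $q_{01}$ a legitimate free coefficient under the centering convention, because for $d=2$ the coefficient of $y^{d-1}=y$ is forced to vanish. The reduction to reduced form also transports simple derivations and isotropy groups isomorphically, as noted in Section \ref{Preliminares}, so the conclusion is intrinsic.
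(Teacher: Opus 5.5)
Your proposal is correct and follows the paper's proof exactly: Proposition~\ref{prop:generic-full-lattice} gives $\Lambda(c,q)=\mathbb Z^2$ on $\Omega_{n,d}$, and Corollary~\ref{cor:lattice-trivial-isotropy} then gives $\Aut_D(A_{c,q})=\{\id\}$. Your extra bookkeeping is accurate and only makes explicit what the paper leaves implicit: $d\geq3$ is needed for $q_{01}$ to be a genuine coordinate, and $n\geq2$ is needed for the monomial $x$ to be admissible.
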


\begin{proof}
By Proposition \ref{prop:generic-full-lattice}, for every
$(c,q)\in\Omega_{n,d}$, we have $\Lambda(c,q)=\mathbb Z^2$. Therefore, the result follows from
Corollary \ref{cor:lattice-trivial-isotropy}.
\end{proof}

\begin{remark}
The condition $q_{00}q_{01}q_{10}\neq0$ is only a convenient sufficient condition ensuring $\Lambda(c,q)=\mathbb Z^2$. There may be reduced pairs outside $\Omega_{n,d}$ for which $\Lambda(c,q)=\mathbb Z^2$ still holds. For all such pairs, Corollary \ref{cor:lattice-trivial-isotropy}
also applies.
\end{remark}

\begin{remark}
The assumption $d\geq3$ is necessary for this particular argument. If $d=2$ and $q$ is centered as a polynomial in $\K[x,y]$, then
\[
q(x,y)=\lambda y^2+p(x),
\quad
\lambda\in\K^\ast.
\]

Therefore, $q(x,-y)=q(x,y)$ and, consequently, $(1,-1)\in G_{c,q}$.
\end{remark}

\section{The quadratic case}\label{5}

In this section, we now discuss the exceptional case $d=\deg_y(q)=2$, where the centered form of $q$ always gives the nontrivial element $(1,-1)\in G_{c,q}$. Since $q$ is centered as a polynomial in $\K[x,y]$, we write $q(x,y)=\lambda y^2+p(x)$, $\lambda\in\K^\ast$, $p(x)=p_0+p_1x+\cdots+p_{n-1}x^{n-1}\in\K[x]$, with $\deg(p)<n=\deg(c)$. 

Thus,
\[
A_{c,q}
=
\K[x,y,z]/(c(x)z-\lambda y^2-p(x)).
\]

Recall that,
\[
G_{c,q}
=
\left\{
(e,u)\in\mathbb G_m^2
\ \middle|\
c(ex)=e^n c(x),\ q(ex,uy)=u^2q(x,y)
\right\}.
\]

Since $q(ex,uy)=\lambda u^2y^2+p(ex)$ and $u^2q(x,y)=\lambda u^2y^2+u^2p(x)$, the second defining equation becomes $p(ex)=u^2p(x)$. Therefore,
\[
G_{c,q}
=
\left\{
(e,u)\in\mathbb G_m^2
\ \middle|\
c(ex)=e^n c(x),\ p(ex)=u^2p(x)
\right\}.
\]

\begin{proposition}\label{prop:quadratic-generic-Gcq}
Let $d=\deg_y(q)=2$. Suppose that $p_0p_1\neq0$. Then,
\[
G_{c,q}=\{(1,1),(1,-1)\}\cong \mathbb Z/2\mathbb Z.
\]
\end{proposition}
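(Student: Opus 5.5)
We compute $G_{c,q}$ directly from the description obtained just before the statement,
\[
G_{c,q}=\{(e,u)\in\mathbb G_m^2 \mid c(ex)=e^nc(x),\ p(ex)=u^2p(x)\},
\]
and show both inclusions.

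\emph{Containment of $\{(1,\pm1)\}$.} For $(1,u)$ with $u^2=1$, both defining identities hold trivially: $c(x)=c(x)$ and $p(x)=p(x)$. Hence $(1,1)$ and $(1,-1)$ lie in $G_{c,q}$. This also follows from the remark at the end of Section \ref{4}.

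\emph{Reverse inclusion.} Let $(e,u)\in G_{c,q}$. We ignore the condition on $c$ entirely and compare coefficients in $p(ex)=u^2p(x)$.

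The constant term gives $p_0=u^2p_0$. Since $p_0\neq0$, this forces $u^2=1$.

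The coefficient of $x$ gives $ep_1=u^2p_1=p_1$. Since $p_1\neq0$, this forces $e=1$.

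Hence $(e,u)\in\{(1,1),(1,-1)\}$. This set is a group of order two, because $-1\neq1$ in characteristic zero, and so it is isomorphic to $\mathbb Z/2\mathbb Z$.

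Equivalently, in the language of the lattice $\Lambda(c,q)$, the monomials $1$ and $x$ of $q$ contribute the vectors $(0,-2)$ and $(1,-2)$. These generate the index-two sublattice $\mathbb Z\times2\mathbb Z$, which is compatible with the conclusion.

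There is no real obstacle here. The only point that needs care is that the hypothesis $p_0p_1\neq0$ is exactly what allows dividing by $p_0$ and $p_1$ in the two coefficient comparisons.
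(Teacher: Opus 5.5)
Your proof is correct and follows the paper's argument exactly: comparing the constant term and the $x$-coefficient of $p(ex)=u^2p(x)$ gives $u^2=1$ and then $e=1$. Your explicit check that $(1,\pm1)$ actually lie in $G_{c,q}$, which the paper leaves implicit, is a welcome addition.
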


\begin{proof}
Let $(e,u)\in G_{c,q}$. Then, $p(ex)=u^2p(x)$. Comparing the constant term and using $p_0\neq0$, we obtain $p_0=u^2p_0$. Thus, $u^2=1$.

Comparing the coefficient of $x$, and using $p_1\neq0$, we obtain $ep_1=u^2p_1$. Since $u^2=1$, it follows that
$e=1$. Thus, every element of $G_{c,q}$ is of the form $(1,u)$ with $u^2=1$. Therefore,
\[
G_{c,q}=\{(1,1),(1,-1)\}.
\]
\end{proof}

In the generic quadratic case, the only nontrivial element of $G_{c,q}$ is $(1,-1)$. An automorphism of $A_{c,q}$ with image $(1,-1)$ under $\psi$ may have a
unipotent component. The next lemma shows that this does not affect the fixed-point argument.

\begin{lemma}\label{lem:quadratic-lifts-fixed-point}
Let $d=\deg_y(q)=2$ and $G_{c,q}=\{(1,1),(1,-1)\}$. Let
\[
\psi:\Aut(A_{c,q})\longrightarrow G_{c,q}
\]
be the canonical homomorphism. If $\sigma\in\Aut(A_{c,q})$ satisfies $\psi(\sigma)=(1,-1)$, then $\sigma$ fixes a maximal ideal of $A_{c,q}$.
\end{lemma}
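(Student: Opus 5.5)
The plan is to write $\sigma$ explicitly, using the split exact sequence of Section \ref{3}, and then to exhibit a fixed point in a fiber $\bar x=a$ with $c(a)\neq0$, where the surface is simply a graph over the $(x,y)$-plane.

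\textbf{Step 1: normal form of $\sigma$.} Since $q(x,-y)=q(x,y)$, the assignment $\tau(\bar x)=\bar x$, $\tau(\bar y)=-\bar y$, $\tau(\bar z)=\bar z$ respects the defining relation, so $\tau$ is an automorphism of $A_{c,q}$. I take $\tau$ to be the diagonal lift of $(1,-1)$; in the description of \cite[Theorem 7]{ABEG}, $(e,u)$ acts by $\bar x\mapsto e\bar x$ and $\bar y\mapsto u\bar y$. If $\psi(\sigma)=(1,-1)$, then $\sigma\tau^{-1}\in\ker\psi=U_{\K}(A_{c,q})$. Hence $\sigma=\exp(\ell(\bar x)\xi_{c,q})\circ\tau$ for some $\ell\in\K[x]$. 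Using the formulas for the unipotent automorphisms, this gives
\[
\sigma(\bar x)=\bar x,\qquad \sigma(\bar y)=-\bar y-c(\bar x)\ell(\bar x).
\]
Replacing $\ell$ by $-\ell$, or composing in the other order, only changes signs, so I write $\sigma(\bar y)=-\bar y+c(\bar x)m(\bar x)$ for some $m\in\K[x]$.

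\textbf{Step 2: compute $\sigma(\bar z)$.} Since $A_{c,q}$ is a domain and $c(\bar x)\neq0$, the image $\sigma(\bar z)$ is determined by the relation $c(\bar x)\sigma(\bar z)=q(\bar x,\sigma(\bar y))$. Expanding $\lambda(-\bar y+cm)^2+p$ and dividing by $c$ gives
\[
\sigma(\bar z)=\bar z-2\lambda m(\bar x)\bar y+\lambda c(\bar x)m(\bar x)^2 .
\]
This calculation is the one step that needs care: it has to be checked against the actual form of $\exp(\ell\xi_{c,q})$ on $\bar z$. That form follows from $\xi_{c,q}(\bar z)=2\lambda\bar y$, but the divisibility argument avoids computing the exponential series directly.

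\textbf{Step 3: exhibit a fixed maximal ideal.} Choose $a\in\K$ with $c(a)\neq0$, and put
\[
b=\tfrac12 c(a)m(a),\qquad z_0=q(a,b)/c(a).
\]
Then $(a,b,z_0)$ lies on the surface, so $\mathfrak m=(\bar x-a,\bar y-b,\bar z-z_0)$ is a maximal ideal of $A_{c,q}$. Now I check that $\sigma$ fixes this point:
\begin{itemize}
\item in the $x$-coordinate, $\sigma(\bar x)-a=\bar x-a$;
\item in the $y$-coordinate, $-b+c(a)m(a)=b$;
\item in the $z$-coordinate, $z_0-2\lambda m(a)b+\lambda c(a)m(a)^2=z_0-\lambda c(a)m(a)^2+\lambda c(a)m(a)^2=z_0$.
\end{itemize}
Hence $\sigma(\mathfrak m)\subseteq\mathfrak m$, and since $\sigma$ is an automorphism, $\sigma(\mathfrak m)=\mathfrak m$. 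Equivalently, evaluating at the point shows $\sigma$ fixes it. This proves the lemma.

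Combined with Proposition \ref{prop:fixed-point-criterion-via-psi}, this yields generic triviality of the isotropy group in the quadratic case.
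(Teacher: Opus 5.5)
Your proof is correct and is the paper's argument: the structure theorem gives $\sigma(\bar x)=\bar x$ and $\sigma(\bar y)=-\bar y+c(\bar x)m(\bar x)$, and you take the point over some $a$ with $c(a)\neq0$ whose $y$-coordinate is $c(a)m(a)/2$. Your explicit formula for $\sigma(\bar z)$ fills in the check the paper calls straightforward; that check is also automatic, because over $c(a)\neq0$ the $z$-coordinate of a point of the surface is determined by its $x$- and $y$-coordinates.
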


\begin{proof}
By the structure theorem for $\Aut(A_{c,q})$  \cite[Lemma 6 and Theorem 7]{ABEG}, every automorphism
$\sigma\in\Aut(A_{c,q})$ can be written in the form
\[
\sigma(\bar x)=e\bar x
\quad \text{and} \quad
\sigma(\bar y)=u\bar y+c(\bar x)\ell(\bar x),
\]
for some $(e,u)\in G_{c,q}$ and some $\ell(x)\in\K[x]$. Moreover, $\psi(\sigma)=(e,u)$. Since, in our case,
$\psi(\sigma)=(1,-1)$, we obtain $e=1$
and $u=-1$. Therefore,
\[
\sigma(\bar x)=\bar x
\quad \text{and} \quad
\sigma(\bar y)=-\bar y+c(\bar x)\ell(\bar x).
\]

Let $\alpha\in\K$ such that $c(\alpha)\neq0$. We denote
\[
\mu=\frac{c(\alpha)\ell(\alpha)}{2}
\quad \text{and} \quad
\nu=\frac{\lambda\mu^2+p(\alpha)}{c(\alpha)}.
\]

Then, $c(\alpha)\nu-\lambda\mu^2-p(\alpha)=0$. Thus, $\mathfrak m=
(\bar x-\alpha,\bar y-\mu,\bar z-\nu)$ is a maximal ideal of $A_{c,q}$. By the choice of $\mu$ and $\nu$, it is straightforward to check that $\sigma(\mathfrak m)\subseteq\mathfrak m$. Therefore, since $\sigma$ is an automorphism and $\mathfrak m$ is maximal, $\sigma(\mathfrak m)=\mathfrak m$.

\end{proof}

\begin{theorem}\label{thm:quadratic-generic-trivial-isotropy}
Let $d=\deg_y(q)=2$ and, with the notation above, suppose that $p_0p_1\neq0$. Then, every simple derivation $D\in\Der_{\K}(A_{c,q})$ satisfies
\[
\Aut_D(A_{c,q})=\{\id\}.
\]
\end{theorem}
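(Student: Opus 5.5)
The plan is to apply the fixed-point criterion of Proposition \ref{prop:fixed-point-criterion-via-psi}, with the two preceding results of this section supplying its hypotheses.

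First, since $p_0p_1\neq0$, Proposition \ref{prop:quadratic-generic-Gcq} gives $G_{c,q}=\{(1,1),(1,-1)\}$. Hence every $\sigma\in\Aut(A_{c,q})$ with $\psi(\sigma)\neq(1,1)$ satisfies $\psi(\sigma)=(1,-1)$.

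Second, Lemma \ref{lem:quadratic-lifts-fixed-point} then applies to each such $\sigma$: it has the form $\sigma(\bar x)=\bar x$, $\sigma(\bar y)=-\bar y+c(\bar x)\ell(\bar x)$, and it fixes the maximal ideal $\mathfrak m=(\bar x-\alpha,\bar y-\mu,\bar z-\nu)$ for any $\alpha$ with $c(\alpha)\neq0$.

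Third, the hypothesis of Proposition \ref{prop:fixed-point-criterion-via-psi} is now satisfied, so for any simple $D$ we get $\Aut_D(A_{c,q})=\{\id\}$. Equivalently, one can argue directly. If $\sigma\in\Aut_D(A_{c,q})$ has $\psi(\sigma)=(1,1)$, then $\sigma\in U_{\K}(A_{c,q})$, and Proposition \ref{thm:unipotent-part-trivial} forces $\sigma=\id$. Otherwise $\sigma$ fixes $\mathfrak m$, and Proposition \ref{fixpoint} forces $\sigma=\id$. This second case is vacuous: it would give $\psi(\sigma)=(1,-1)$ and $\sigma=\id$ simultaneously, which is impossible.

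The only step I expect to need care is checking $\sigma(\mathfrak m)\subseteq\mathfrak m$ in the lemma, in particular for the generator $\bar z-\nu$. The images of $\bar x-\alpha$ and $\bar y-\mu$ clearly lie in $\mathfrak m$, because $-\alpha$ and $\mu$ were chosen so that $-\mu+c(\alpha)\ell(\alpha)=\mu$. For $\bar z$, I would use that $c(\bar x)$ is a unit modulo $\mathfrak m$. Then $\sigma(\bar z)$ is determined by the relation $c(\bar x)\sigma(\bar z)=\lambda\sigma(\bar y)^2+p(\bar x)$. Evaluating at the point gives $c(\alpha)\sigma(\bar z)\equiv\lambda\mu^2+p(\alpha)$ modulo $\mathfrak m$, so $\sigma(\bar z)\equiv\nu$. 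This relies on the lemma as stated, so the theorem itself reduces to the formal combination above.
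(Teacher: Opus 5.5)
Your proposal is correct and matches the paper's proof. The paper also uses Proposition \ref{prop:quadratic-generic-Gcq} to split $\sigma\in\Aut_D(A_{c,q})$ according to $\psi(\sigma)\in\{(1,1),(1,-1)\}$, applying Proposition \ref{thm:unipotent-part-trivial} in the first case and Lemma \ref{lem:quadratic-lifts-fixed-point} with Proposition \ref{fixpoint} in the second, which is exactly the content of Proposition \ref{prop:fixed-point-criterion-via-psi}. Your verification that $\sigma(\bar z)\equiv\nu$ modulo $\mathfrak m$, using $c(\alpha)\neq0$, correctly fills in the step the paper calls straightforward.
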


\begin{proof}
Let $D$ be a simple derivation of $A_{c,q}$, and let $\sigma\in\Aut_D(A_{c,q})$. By Proposition \ref{prop:quadratic-generic-Gcq}, we have $G_{c,q}=\{(1,1),(1,-1)\}$. Consider the canonical homomorphism
\[
\psi:\Aut(A_{c,q})\longrightarrow G_{c,q}.
\]

If $\psi(\sigma)=(1,1)$, then $\sigma\in\ker\psi=U_{\K}(A_{c,q})$. Then, $\sigma\in\Aut_D(A_{c,q})\cap U_{\K}(A_{c,q})$. By the triviality of the unipotent part in the isotropy group of a simple derivation, Proposition \ref{thm:unipotent-part-trivial}, we obtain $\sigma=\id$. Suppose that $\psi(\sigma)=(1,-1)$. By Lemma \ref{lem:quadratic-lifts-fixed-point}, the automorphism $\sigma$ fixes a maximal ideal of $A_{c,q}$. Since $D$ is simple and $\sigma\in\Aut_D(A_{c,q})$,
the fixed-point criterion, Proposition \ref{fixpoint}, implies that $\sigma=\id$. Therefore, $\Aut_D(A_{c,q})=\{\id\}$.
\end{proof}

\begin{corollary}\label{cor:generic-all-degrees}
For each fixed pair of degrees $n=\deg(c)\geq2$ and $d=\deg_y(q)\geq2$, there exists a nonempty Zariski open subset of the parameter space of reduced pairs $(c,q)$ such that, for every pair in this open subset and every simple derivation $D\in\Der_{\K}(A_{c,q})$, one has
\[
\Aut_D(A_{c,q})=\{\id\}.
\]
\end{corollary}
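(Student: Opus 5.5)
The corollary follows by splitting into the two degree regimes already treated and checking, in each regime, that the relevant condition cuts out a nonempty Zariski open subset of the parameter space of reduced pairs. Fix $n\geq2$ and $d\geq2$. The parameter space is the open subset $\{c_n\neq0,\ \lambda\neq0\}$ of the affine space whose coordinates are $c_0,\dots,c_{n-2},c_n$, $\lambda$, and the $q_{ij}$ with $0\leq i<n$, $0\leq j\leq d-2$. It is irreducible, so any subset defined by the nonvanishing of a nonzero polynomial in these coordinates is a nonempty Zariski open subset.

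\textbf{Case $d\geq3$.} Take the open set $\Omega_{n,d}$. It is nonempty and open by Proposition~\ref{prop:generic-full-lattice}. The coordinates $q_{00},q_{01},q_{10}$ are genuine coordinates of the parameter space: $d\geq3$ gives $0,1\leq d-2$, and $n\geq2$ gives $1<n$. Corollary~\ref{cor:generic-trivial-isotropy} then shows that every simple derivation of $A_{c,q}$ has trivial isotropy for every $(c,q)\in\Omega_{n,d}$.

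\textbf{Case $d=2$.} Here the centering condition forces $q=\lambda y^2+p(x)$ with $p(x)=\sum_{i<n}q_{i0}x^i$, so $p_0=q_{00}$ and $p_1=q_{10}$. Both are coordinates, again because $n\geq2$. Take the subset
\[
\Omega_{n,2}=\{(c,q)\mid p_0p_1\neq0\}.
\]
It is the complement of the hypersurface $\{q_{00}q_{10}=0\}$, hence open. It is nonempty, as shown for instance by $c=x^n$ and $q=y^2+1+x$. For every pair in $\Omega_{n,2}$, Theorem~\ref{thm:quadratic-generic-trivial-isotropy} gives $\Aut_D(A_{c,q})=\{\id\}$ for every simple derivation $D$.

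Combining the two cases yields the corollary. There is no real obstacle. The only point needing care is confirming that the monomials used, namely $x$, $y$ and $1$, are allowed by the reduced-form constraints. These constraints are $\deg_x(q)<n$ and centering, which kills the coefficient of $y^{d-1}$. The hypotheses $n\geq2$ and $d\geq3$ (or $d=2$) guarantee that these monomials are permitted.
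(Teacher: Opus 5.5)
Your proposal is correct and follows the paper's proof: you split into $d\geq3$, where $\Omega_{n,d}$ and Corollary~\ref{cor:generic-trivial-isotropy} apply, and $d=2$, where the open set $p_0p_1\neq0$ and Theorem~\ref{thm:quadratic-generic-trivial-isotropy} apply. Your extra checks are details the paper leaves implicit and do not change the argument: that $q_{00},q_{01},q_{10}$ (resp.\ $p_0,p_1$) are genuine coordinates under the reduced-form constraints, and that $c=x^n$, $q=y^2+1+x$ witnesses nonemptiness.
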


\begin{proof}
If $d\geq3$, the result follows from Corollary \ref{cor:generic-trivial-isotropy}. If
$d=2$, then we write $q(x,y)=\lambda y^2+p(x)$. In this case, we consider the nonempty Zariski open subset defined by
$p_0p_1\neq0$. For pairs in this open subset, the result follows from Theorem \ref{thm:quadratic-generic-trivial-isotropy}.
\end{proof}

\begin{remark}
The quadratic case shows that the triviality of $G_{c,q}$ is not necessary for the triviality of the isotropy group of a simple derivation. Indeed, on the open subset defined by
$p_0p_1\neq0$, we have
\[
G_{c,q}=\{(1,1),(1,-1)\}\cong\mathbb Z/2\mathbb Z.
\]
However, every simple derivation $D\in\Der_{\K}(A_{c,q})$
satisfies $\Aut_D(A_{c,q})=\{\id\}$. The reason is that every automorphism whose image under
$\psi:\Aut(A_{c,q})\longrightarrow G_{c,q}$ is the nontrivial element $(1,-1)$ fixes a maximal ideal of $A_{c,q}$. By the fixed-point criterion, such an automorphism cannot belong to $\Aut_D(A_{c,q})$, when $D$ is simple.

Thus, any counterexample to the statement $\Aut_D(A_{c,q})=\{\id\}$ must come from a special reduced pair for which some nontrivial element of $G_{c,q}$ admits a lift acting without fixed maximal ideals.
\end{remark}

The next example shows that this special phenomenon actually occurs.

\begin{example}\label{ex:special-fixed-point-free-diagonal}
Consider the Danielewski-type algebra
\[
\mathbb A
=
\K[x,y,z]/(x^2z-y^2-1).
\]
Thus, $c(x)=x^2$ and $q(x,y)=y^2+p(x)$ with $p(x)=1$. In particular, this pair does not belong to the open subset defined by $p_0p_1\neq0$. The diagonal automorphism corresponding to $(-1,-1)\in G_{c,q}$ is given by
\[
\sigma(\bar x)=-\bar x,
\quad
\sigma(\bar y)=-\bar y,
\quad
\sigma(\bar z)=\bar z.
\]

Furthermore, this automorphism has no fixed maximal ideal. Indeed, a fixed point would have to satisfy that $\bar x=0$ and $\bar y=0$. Substituting these values into the defining equation we obtain $-1=0$, which is impossible.

Therefore, this special pair lies outside the generic quadratic open subset
treated above, and the fixed-point criterion does not exclude the diagonal
automorphism $\sigma$. In the next section, we show that this phenomenon actually produces a simple derivation with nontrivial isotropy.
\end{example}

\section{A counterexample} \label{6}

Consider
\[
\mathbb A=\K[x,y,z]/(x^2z-y^2-1)
\]
and the involution $\sigma(\bar x)=-\bar x$, $\sigma(\bar y)=-\bar y$, and $\sigma(\bar z)=\bar z$. Thus, $\sigma$ has no fixed point on $\Spec(\mathbb A)$. 

We denote by
\[
\mathbb A^{\langle\sigma\rangle}
=
\{a\in \mathbb A\mid \sigma(a)=a\}
\]
the invariant ring of the involution $\sigma$. Notice that $\mathbb A^{\langle\sigma\rangle}$ is generated by
\[
u=\bar x^2,
\quad
v=\bar x\bar y,
\quad
w=\bar z.
\]

Moreover, $v^2=\bar x^2\bar y^2=u(uw-1)=u^2w-u$. Therefore,
\[
\mathbb A^{\langle\sigma\rangle}
\cong
\mathbb B
=
\K[u,v,w]/(u^2w-v^2-u).
\]

Using these notations, we obtain the following lemma.

\begin{lemma}\label{lem:explicit-lift-involution}
Let $E\in\Der_{\K}(\mathbb B)$. Then, there exists a unique derivation $D\in\Der_{\K}(\mathbb A)$ such that $D|_{\mathbb B}=E$. Moreover, this derivation commutes with $\sigma$ and also if $E$ is simple, then $D$ is simple.
\end{lemma}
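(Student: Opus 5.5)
The plan has four steps: uniqueness, existence, commutation with $\sigma$, and transfer of simplicity. Throughout I view $\mathbb B=\mathbb A^{\langle\sigma\rangle}\subseteq\mathbb A$ via $u=\bar x^2$, $v=\bar x\bar y$, $w=\bar z$. I also use that $\mathbb A$ is a domain, since $x^2z-y^2-1$ is irreducible (it has degree one in $z$, and $x^2$, $y^2+1$ are coprime).

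\emph{Uniqueness.} Let $D_1,D_2$ extend $E$ and put $\delta=D_1-D_2$. Then $\delta$ is a derivation of $\mathbb A$ vanishing on $\mathbb B$.
\begin{itemize}
\item From $\bar x^2=u$ we get $2\bar x\,\delta(\bar x)=0$, so $\delta(\bar x)=0$, since $\mathbb A$ is a domain and $\operatorname{char}\K=0$.
\item From $\bar y^2=uw-1\in\mathbb B$ we get $\delta(\bar y)=0$ in the same way.
\item Finally $\delta(\bar z)=\delta(w)=0$.
\end{itemize}
Hence $\delta=0$.

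\emph{Existence.} This is the main step. I would argue that $\pi:\Spec\mathbb A\to\Spec\mathbb B$ is the quotient by the finite group $\langle\sigma\rangle\cong\mathbb Z/2$, which acts freely because $\sigma$ has no fixed maximal ideal (Example \ref{ex:special-fixed-point-free-diagonal}). The quotient of an affine variety by a free action of a finite group is a finite étale morphism. Étaleness gives $\Omega_{\mathbb A/\K}\cong\mathbb A\otimes_{\mathbb B}\Omega_{\mathbb B/\K}$, and therefore
\[
\Der_{\K}(\mathbb A)\cong\operatorname{Hom}_{\mathbb A}(\Omega_{\mathbb A/\K},\mathbb A)\cong\operatorname{Hom}_{\mathbb B}(\Omega_{\mathbb B/\K},\mathbb A)=\Der_{\K}(\mathbb B,\mathbb A).
\]
The composition of $E$ with the inclusion $\mathbb B\hookrightarrow\mathbb A$ is then an element of $\Der_{\K}(\mathbb B,\mathbb A)$, and it corresponds to a derivation $D$ of $\mathbb A$ with $D|_{\mathbb B}=E$.

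If one prefers a hands-on argument, the key input is that $u$ and $uw-1$ generate the unit ideal of $\mathbb B$. Indeed $1=\bar x^2w-\bar y^2$, so the branch locus $\{u=0\}\cap\{uw-1=0\}$ is empty. This identity is what should make it possible to solve $2\bar x D(\bar x)=E(u)$ and $2\bar y D(\bar y)=E(uw)$ consistently with $\bar x D(\bar y)+\bar y D(\bar x)=E(v)$. I would present the étale argument and mention this identity as its concrete source.

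\emph{Commutation with $\sigma$.} The map $\sigma D\sigma^{-1}$ is again a $\K$-derivation of $\mathbb A$. Since $\sigma$ is the identity on $\mathbb B$, for $b\in\mathbb B$ we have $\sigma D\sigma^{-1}(b)=\sigma(E(b))=E(b)$, because $E(b)\in\mathbb B$. Thus $\sigma D\sigma^{-1}$ also extends $E$, and uniqueness gives $\sigma D\sigma^{-1}=D$, so $\sigma\in\Aut_D(\mathbb A)$.

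\emph{Simplicity.} Suppose $I\subseteq\mathbb A$ is a nonzero proper $D$-stable ideal, and set $J=I\cap\mathbb B$. Since $D$ maps $\mathbb B$ into $\mathbb B$ and $I$ into $I$, the ideal $J$ is $E$-stable.
\begin{itemize}
\item $J$ is proper, since $1\notin I$.
\item $J$ is nonzero: for $0\neq a\in I$, the norm $a\,\sigma(a)$ lies in $I\cap\mathbb B$, and it is nonzero because $\mathbb A$ is a domain and $\sigma$ is injective.
\end{itemize}
This contradicts the simplicity of $E$, so $D$ is simple.

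I expect the existence step to be the main obstacle, namely justifying étaleness (or doing the explicit division) cleanly. The other three steps are short.
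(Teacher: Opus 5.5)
Your proof is correct, but it reaches existence by a different route from the paper. The simplicity step, via the norm $a\sigma(a)\in I\cap\mathbb B$, is the same as the paper's.

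\textbf{How the paper argues.} It writes $D$ down explicitly: $D(\bar z)=E(w)$, $D(\bar x)=E(u)/(2\bar x)$ and $D(\bar y)=(E(v)-\bar y D(\bar x))/\bar x$. It asserts that these divisions by $\bar x$ are legitimate and that $D$ respects $x^2z-y^2-1$, citing the differentiated relation $(2uw-1)E(u)+u^2E(w)-2vE(v)=0$. The hidden input is that $2uw-1\equiv-1$ and $\bar y^2\equiv-1$ modulo $\bar x\mathbb A$, which is exactly your identity $1=\bar x^2w-\bar y^2$. It then gets $\sigma D=D\sigma$ by observing that $D(\bar x)$ and $D(\bar y)$ are anti-invariant, and it only asserts uniqueness.

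\textbf{How you argue.} You obtain existence and uniqueness together. Since $\sigma$ has no fixed point, $\Spec\mathbb A\to\Spec\mathbb B$ is a free $\mathbb Z/2$-quotient, hence finite \'etale. Therefore restriction $\Der_{\K}(\mathbb A)\to\Der_{\K}(\mathbb B,\mathbb A)$ is bijective. You then deduce $\sigma D\sigma^{-1}=D$ from uniqueness, because $\sigma D\sigma^{-1}$ also extends $E$. Your uniqueness check, via $2\bar x\delta(\bar x)=0$ and $2\bar y\delta(\bar y)=0$, is more explicit than the paper's.

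\textbf{What each buys.} Your approach is conceptual. It shows that fixed-point-freeness of $\sigma$ is precisely what makes the lift exist, and it works verbatim for any free action of a finite group on an affine variety. The cost is citing nontrivial facts: free finite quotients are \'etale (e.g.\ Mumford, \emph{Abelian Varieties}, \S7), and \'etale maps identify $\Omega_{\mathbb A/\K}$ with $\mathbb A\otimes_{\mathbb B}\Omega_{\mathbb B/\K}$. Freeness at closed points suffices here because the fixed locus of $\sigma$ is closed. The paper's route is elementary and gives formulas for $D$, but it leaves the divisibility and well-definedness checks to the reader. Completing your ``hands-on'' remark would amount to carrying out exactly those checks.
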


\begin{proof}
Write
\[
E_u=E(u),
\quad
E_v=E(v),
\quad
E_w=E(w).
\]
Since $E$ preserves the relation $u^2w-v^2-u=0$,
we have
\[
(2uw-1)E_u+u^2E_w-2vE_v=0.
\]

We define
\[
D(\bar z)=E_w,
\quad
D(\bar x)=\frac{E_u}{2\bar x},
\quad
D(\bar y)=\frac{E_v-\bar yD(\bar x)}{\bar x}.
\]
These expressions belong to $\mathbb A$. Indeed, using the identity above and the equations $u=\bar x^2$, $v=\bar x\bar y$, and $w=\bar z$, one checks that $E_u\in \bar x\mathbb A$ and $E_v-\bar yD(\bar x)\in \bar x\mathbb A$. By construction,
\[
D(\bar x^2)=E_u,
\quad
D(\bar x\bar y)=E_v,
\quad
D(\bar z)=E_w.
\]

Then,
\[
D(u)=E(u),
\quad
D(v)=E(v),
\quad
D(w)=E(w),
\]
so $D|_{\mathbb B}=E$.

It is straightforward to check that $D$ is well defined on $\mathbb A$ and the uniqueness follows from the identities $u=\bar x^2$, $v=\bar x\bar y$, and $w=\bar z$. 

We show that $D$ commutes with $\sigma$. Since
\[
E_u,E_v,E_w\in\mathbb B=\mathbb A^{\langle\sigma\rangle},
\]
they are invariant under $\sigma$. From the expressions defining $D$, we obtain that $D(\bar x)$ and $D(\bar y)$ are anti-invariant, while $D(\bar z)$ is invariant. Thus,
\[
\sigma D(\bar x)=D\sigma(\bar x),
\quad
\sigma D(\bar y)=D\sigma(\bar y),
\quad
\sigma D(\bar z)=D\sigma(\bar z).
\]
Therefore, $\sigma D=D\sigma$.

Finally, assume that $E$ is simple. Let $0\neq I\subseteq \mathbb A$ be a $D$-stable ideal. Let $0\neq a\in I$. Since $\mathbb A$ is a
domain,
\[
N(a):=a\sigma(a)\neq0.
\]
Moreover, $N(a)\in \mathbb A^{\langle\sigma\rangle}=\mathbb B$ and, because $a\in I$, also $N(a)\in I$. Thus, $0\neq I\cap\mathbb B$. The ideal $J=I\cap\mathbb B$ is $E$-stable, since $D|_{\mathbb B}=E$. By the simplicity of $E$, we have $J=\mathbb B$. Therefore, $1\in J\subseteq I$,
and hence $I=\mathbb A$. Thus, $D$ is simple.
\end{proof}

\begin{remark}

Using the notations above, suppose that $ \mathbb B$ admits a simple derivation $E\in\Der_{\K}(\mathbb B)$. By Lemma \ref{lem:explicit-lift-involution}, $E$ lifts to a simple derivation $D\in\Der_{\K}(\mathbb A)$ such that $\sigma D=D\sigma$. Therefore, $\sigma\in\Aut_D(\mathbb A)$. Since
$\sigma\neq\id$, we obtain
\[
\Aut_D(\mathbb A)\neq\{\id\}.
\]

Thus, if the quotient algebra
\[
\mathbb B=\K[u,v,w]/(u^2w-v^2-u)
\]
admits a simple derivation, then the analogue of the Mendes-Pan theorem fails for the Danielewski-type algebra
\[
\mathbb A=\K[x,y,z]/(x^2z-y^2-1).
\]
    
\end{remark}

Thus the search for a counterexample is reduced, in this case, to the following concrete problem:

\begin{question}
Does the algebra $\mathbb B$ admit a simple derivation?
\end{question}

We prove that the answer is affirmative by constructing an explicit simple derivation of $\mathbb B$.

\begin{proposition}\label{B-admits-simple-derivation}
The derivation $\mathcal{D}\in\Der_{\K}(\mathbb B)$ defined by
\[
\mathcal{D} (w)=1,
\]
\[
\mathcal{D}(u)=v+u^2w-u-u^2,
\]
\[
\mathcal{D}(v)=uvw-uv+uw-\frac{v+1}{2}
\]
is simple.
\end{proposition}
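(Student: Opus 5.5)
We will argue by contradiction, using the standard reduction of simplicity to the absence of invariant prime ideals. First, we check that $\mathcal D$ is well defined. Applying the formulas to $F=u^2w-v^2-u$ gives $(2uw-1)\mathcal D(u)+u^2\mathcal D(w)-2v\mathcal D(v)$. Using $v^2=u^2w-u$, this expression reduces to $0$ in $\mathbb B$, so $\mathcal D$ preserves the ideal $(F)$. Next, since $\mathbb B$ is Noetherian and $\operatorname{char}\K=0$, a nonzero proper $\mathcal D$-stable ideal exists if and only if a nonzero $\mathcal D$-stable prime ideal $P$ exists. This holds because the minimal primes over a differential ideal are differential. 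So we fix such a prime $P$ and derive a contradiction in each case.

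\emph{Maximal ideals.} If $P=\mathfrak m$ is maximal, then $w-a\in\mathfrak m$ for some $a\in\K$. Hence $1=\mathcal D(w-a)\in\mathfrak m$, which is impossible. Thus $P$ has height one.

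\emph{Primes containing $u$.} If $u\in P$, then $v^2=u^2w-u\in P$, so $v\in P$. Modulo $(u,v)$ we have $\mathcal D(v)\equiv -\tfrac12$, so $\tfrac12\in P$, again a contradiction.

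\emph{Primes not containing $u$.} It remains to treat a height-one prime $P$ with $u\notin P$. Localizing at $u$ gives
\[
\mathbb B[u^{-1}]=\K[u,u^{-1},v],\qquad w=\frac{v^2+u}{u^2}.
\]
So $P$ corresponds to an irreducible $f\in\K[u,v]$, not divisible by $u$, defining a curve invariant under the polynomial vector field $\delta=u\mathcal D$. Explicitly,
\[
\delta(u)=u\bigl(v+v^2-u^2\bigr),\qquad \delta(v)=v^3+v^2+uv-u^2v+\tfrac{u}{2}-\tfrac{uv}{2},
\]
and $f$ satisfies $\delta(f)=gf$ with $g\in\K[u,v]$. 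We must show that no such Darboux polynomial $f$ exists. We will do this in two steps.

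First, we compare highest-degree homogeneous parts. The cubic part of $\delta$ is $(v^2-u^2)(u\partial_u+v\partial_v)$, which is radial. This forces $\deg g\le 2$, with $g_2=(v^2-u^2)\deg f$. The lower-order terms are then analyzed on the lines $v=\pm u$ at infinity, for example via weighted degrees and the Newton polygon of $f$, to restrict the possible leading forms of $f$.

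Second, we use the invariant line $u=0$ of $\delta$. On it, $\delta|_{u=0}=v^2(v+1)\partial_v$. Since $u\nmid f$, Bézout ensures the curve $f=0$ meets $u=0$ in the affine plane or at infinity. Any affine intersection point must be a zero of $\delta$ on $u=0$, i.e.\ $(0,0)$ or $(0,-1)$. At these singular points we compute the linear parts of $\delta$: at $(0,-1)$ the linear part is $\tfrac32u\partial_u+\bigl(v+1+u\bigr)\partial_v$, and $(0,0)$ is degenerate. We then decide, using eigenvalue ratios or a blow-up, which invariant analytic branches pass through them. The goal is to show that the only invariant branches through these points are contained in $u=0$, and that no algebraic branch compatible with the constraints from the first step can reach infinity.

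I expect this last step to be the main obstacle, namely excluding an invariant algebraic curve $f=0$ of $\delta$ other than $u=0$, especially near the degenerate point $(0,0)$ and the directions $v=\pm u$ at infinity. If the local analysis becomes unwieldy, the fallback is a direct Darboux-polynomial computation. In that approach we write $f=\sum f_{ij}u^iv^j$, take $g=g_0+g_1+g_2$ with $g_2$ as above, and compare coefficients starting from the lowest power of $u$. Setting $u=0$ yields $v^2(v+1)f'(0,v)=g(0,v)f(0,v)$, which forces $f(0,v)=\kappa v^a(v+1)^b$. Induction on the $u$-adic expansion should then show that this is incompatible with $\delta(f)=gf$ unless $f$ is constant.
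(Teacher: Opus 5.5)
\textbf{There is a genuine gap: the case of primes $P$ with $u\notin P$ is never actually proved, and that case is the whole content of the proposition.}

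Your preliminary steps are sound. The well-definedness check is correct. So is the passage to a nonzero $\mathcal D$-stable prime $P$. The exclusion of maximal ideals via $\mathcal D(w)=1$ is correct, and so is the exclusion of primes containing $u$ via $v^2=u^2w-u$ and $\mathcal D(v)\equiv-\tfrac12 \pmod{(u,v)}$. Once $\mathbb B_u$ is known to be $\mathcal D$-simple, this prime-ideal route would finish more cleanly than the paper's descent on the minimal $m$ with $u^m\in I$.

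The gap is the remaining case. You never show that $\delta=u\mathcal D$ has no irreducible Darboux polynomial $f\in\K[u,v]$ with $u\nmid f$. You only outline a program (Newton polygons along $v=\pm u$, local analysis at $(0,0)$ and $(0,-1)$, a $u$-adic induction) and concede it is the main obstacle. This is not a routine computation. The ring $\mathbb B_u=\K[u,u^{-1},v]$ is also $\K[w,t,(w-t^2)^{-1}]$ with $t=v/u$, because $1/u=w-t^2$. The parabola $w=t^2$ is not invariant. Hence the statement you need is \emph{equivalent} to the absence of invariant algebraic curves for $\partial_w+\tfrac{w+t-t^2}{2}\partial_t$ on the $(w,t)$-plane. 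After $X=w+\tfrac14$, $T=t-\tfrac12$ and a rescaling, this is Nowicki's derivation $\partial_x+(y^2+x)\partial_y$. Equivalently, the Riccati equation $y'=y^2+x$ has no algebraic solutions. In your $(u,v)$ chart this theorem is only disguised, and the points $(0,0)$, $(0,-1)$ on $u=0$ are where the behaviour at infinity of that Riccati equation appears.

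Your local analysis also contains an error that shows why it cannot close the gap. Since $v+v^2$ vanishes at $v=-1$, the component $\delta(u)=u(v+v^2-u^2)$ vanishes to order two at $(0,-1)$. So the linear part there is $(v+1)\partial_v$, not $\tfrac32u\partial_u+(v+1+u)\partial_v$. Thus $(0,-1)$ is a saddle-node. Its strong separatrix is $u=0$, and it also carries a formal weak separatrix transverse to $u=0$ that no eigenvalue-ratio argument can exclude. To rule it out you would have to prove it is not algebraic, which is again the transcendence statement above. The point $(0,0)$, whose linear part is the nilpotent $\tfrac u2\partial_v$, is even less tractable.

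The missing idea is the change of coordinates the paper uses. Work on $\mathbb B_u\cong\K[w,t,(w-t^2)^{-1}]$, where $\mathcal D(w)=1$ and $\mathcal D(t)=\tfrac{w+t-t^2}{2}$. There $\mathcal D$ is the localization of a polynomial derivation which, up to the scalar $a$ with $4a^3=-1$, is affinely conjugate to Nowicki's simple derivation. Simplicity passes to the localization, so $\mathbb B_u$ is $\mathcal D$-simple. Your own argument for primes with $u\in P$ then completes the proof.
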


\begin{proof}
It is straightforward to check that $\mathcal{D}$ is well defined on $\mathbb B$. Indeed, 
\[
\mathcal{D}(u^2w-v^2-u)=(2uw-2u-1)(u^2w-v^2-u).
\]

We localize at $u$. In $\mathbb B_u$, denote $t:=\frac vu$. Using the relation $u^2w=v^2+u$, we obtain
\[
\frac1u=w-t^2.
\]

Thus, $\mathbb B_u\cong \K[w,t,(w-t^2)^{-1}]$. Under this identification, $\mathcal{D}$ is the localization of the derivation
\[
\delta
=
\partial_w+\frac{w+t-t^2}{2}\partial_t
\]
of $\K[w,t]$. Indeed,
\[
\mathcal{D}(w)=1
\]
and
\[
\mathcal{D}(t)
=
\mathcal{D}\left(\frac vu\right)
=
\frac{u\mathcal{D}(v)-v\mathcal{D}(u)}{u^2}=
\frac{w+t-t^2}{2}.
\]

We claim that $\delta$ is simple. Denote
\[
X:=w+\frac14,
\quad
T:=t-\frac12.
\]

Then, $\delta(X)=1$ and $\delta(T)=\frac{X-T^2}{2}$. Thus,
\[
\delta
=
\partial_X+\frac{X-T^2}{2}\partial_T.
\]

Since $\K$ is algebraically closed and has characteristic zero, we can choose $a\in\K^\ast$ such that $4a^3=-1$. Denote,
\[
b:=2a^2,
\quad
x:=aX,
\quad
y:=bT.
\]

Then, $\frac1a\delta(x)=1$ and $\frac1a\delta(y)=y^2+x$. Therefore,
\[
\frac1a\delta
=
\partial_x+(y^2+x)\partial_y.
\]

Notice that the derivation $\partial_x+(y^2+x)\partial_y$
is Nowicki's simple derivation; see \cite{Nowicki2008}. Therefore, $\delta$ is simple also. It follows that the localization
\[
\K[w,t,(w-t^2)^{-1}]
\]
is also $\delta$-simple. 

Let $0\neq I\subseteq \mathbb B$ be an $\D$-stable ideal. Since $\mathbb B$ is a domain, the localization $I_u\subseteq \mathbb B_u$ is a nonzero $\D$-stable ideal. Since $\mathbb B_u$ is $\D$-simple, we obtain $I_u= \mathbb B_u$.
Therefore, there exists $m\geq0$ such that $u^m\in I$: choose $m$ minimal with this property.

We prove that $m=0$. Suppose, by contradiction, that $m>0$. Since $u^m\in I$ and $I$ is $\D$-stable, we have $\D(u^m)\in I$. However,
\[
\D(u^m)=m u^{m-1}v+m u^{m+1}w-m u^m-m u^{m+1}.
\]
The last three terms belong to $I$, it follows that $u^{m-1}v\in I$. If $m=1$, then $u\in I$ and $v\in I$.

Since $\D(v)=uvw-uv+uw-\frac{v+1}{2}$ and $\D(v)\in I$, we obtain $I= \mathbb B$: this contradicts the minimal choice of $m>0$. Therefore, $m\neq1$.

Assume that $m\geq2$. Since $u^{m-1}v\in I$
and $I$ is $\D$-stable, we have $\D(u^{m-1}v)\in I$. By the Leibniz rule,
\[
\D(u^{m-1}v)
=
(m-1)u^{m-2}\D(u)v
+
u^{m-1}\D(v).
\]

Using the defining relation $v^2=u^2w-u$, we obtain $u^{m-2}v^2=u^mw-u^{m-1}$ and then:
\[
u^{m-2}\D(u)v
=
u^mw-u^{m-1}
+
u^mwv
-
u^{m-1}v
-
u^mv.
\]
All terms in this expression belong to $I$, except possibly $
-u^{m-1}$. Indeed, $u^m\in I$ and $u^{m-1}v\in I$. Therefore,
$(m-1)u^{m-2}\D(u)v
+
(m-1)u^{m-1}
\in I$. On the other hand, we have
\[
u^{m-1}\D(v)
=
u^mvw-u^mv+u^mw-\frac12u^{m-1}v-\frac12u^{m-1}.
\]
Again, all terms belong to $I$, except possibly $-\frac12u^{m-1}$. Thus, $u^{m-1}\D(v)+\frac12u^{m-1}\in I$.

Combining the two previous observations and using $\D(u^{m-1}v)\in I$, we obtain
\[
-\left(m-1\right)u^{m-1}-\frac12u^{m-1}\in I.
\]

That is, $-\left(m-\frac12\right)u^{m-1}\in I$ and it follows that $u^{m-1}\in I$: this again contradicts the minimality of $m$. Finally, $1=u^0\in I$, so $I= \mathbb B$.
Thus, every nonzero $\D$-stable ideal of $\mathbb B$ is equal to $\mathbb B$, and then $\D$ is simple.
\end{proof}

Combining Lemma \ref{lem:explicit-lift-involution} with
Proposition \ref{B-admits-simple-derivation}, we obtain a simple derivation
$D\in\Der_{\K}(\mathbb A)$ whose restriction to $\mathbb B$ is
$\mathcal D$. Moreover, this derivation commutes with the involution
\[
\sigma(\bar x)=-\bar x,
\quad
\sigma(\bar y)=-\bar y,
\quad
\sigma(\bar z)=\bar z.
\]
Thus, $\sigma\in\Aut_D(\mathbb A)$. Since $\sigma\neq\id$, it follows that $\Aut_D(\mathbb A)\neq\{\id\}$. Therefore, the Mendes-Pan analogue does not hold for all Danielewski-type algebras.

\section{Foliations and birational symmetries}\label{7}

In this section we assume that $\K=\mathbb C$. Let $X=\Spec(A_{c,q})$ and let $D\in\Der_{\mathbb C}(A_{c,q})$
be a simple derivation. The derivation $D$ defines an algebraic vector field on the affine surface $X$. Since $D$ is simple, this vector field has no zero on $X$. Indeed, if $p\in X$ were a zero of $D$, then the corresponding maximal ideal $\mathfrak m_p\subseteq A_{c,q}$ would be $D$-stable, contradicting simplicity. Similarly, $D$ does not preserve any algebraic curve contained in $X$. Indeed, if $C\subseteq X$ were an invariant algebraic curve, then its defining ideal
$I(C)\subseteq A_{c,q}$ would be a nonzero proper $D$-stable ideal. This again contradicts simplicity. Thus, on the smooth locus  $X_{\mathrm{reg}} $, the derivation  $D $ defines a nonsingular algebraic foliation  $\mathcal F_D $ with no invariant algebraic curve contained in  $X $.

This is the natural analogue, in the Danielewski-type setting, of the foliations associated to simple derivations of $\mathbb C[x,y]$. In the affine plane case, such foliations were studied by Cousin, Mendes and Pan
\cite{CMP}. Their work considers the foliation on
$\mathbb C^2$ defined by a simple derivation and its extension to $\mathbb P^2=\mathbb C^2\cup L_\infty$. They prove, among other things, that if the restriction of a foliation of $\mathbb P^2$ to $\mathbb C^2$ has no invariant algebraic curve, then the group of birational symmetries of the foliation is finite. In particular, this applies to foliations associated to simple derivations of
$\mathbb C[x,y]$.

There is, however, an important difference between the affine plane and the Danielewski-type setting. In the affine plane case, one has the compactification $\mathbb C^2\subseteq \mathbb P^2$ with boundary the line at infinity
$L_\infty=\mathbb P^2\setminus\mathbb C^2$. For a Danielewski-type surface, the choice of
compactification and the geometry of the boundary divisor are more subtle. Thus, the birational geometry of the foliation may depend on the interaction between the compactified foliation and the boundary.

This suggests that the correct analogue of the affine-plane theory should take into account not only the foliation, but also a compactification pair
\[
(\overline X,\Delta),
\]
where $X\subseteq\overline X$ is a smooth projective compactification and $\Delta=\overline X\setminus X$
is the boundary divisor.

There are several symmetry groups naturally associated to $D$ and to $\mathcal F_D$. The first one is the isotropy group of the derivation: $\Aut_D(A_{c,q})$, the group studied in the previous sections. A second, less restrictive group is the analogue, in our affine surface setting, of the group $\operatorname{Pol}(\mathcal F_\partial)$ considered
by Cousin, Mendes and Pan \cite{CMP}:
\[
\Pol(\mathcal F_D)
=
\left\{
\sigma\in\Aut_{\mathbb C}(X)
\ \middle|\
\sigma D\sigma^{-1}=h_\sigma D
\text{ for some }h_\sigma\in\mathcal O(X)^\ast
\right\}.
\]

Clearly, $\Aut_D(A_{c,q})\subseteq
\Pol(\mathcal F_D)$. This inclusion may be strict; see \cite[Proposition 7.1]{CMP}, where foliations associated to simple derivations with nontrivial polynomial symmetries are constructed.

Let $j:X\hookrightarrow \overline X$ be a smooth projective compactification with boundary divisor $\Delta=\overline X\setminus X$. The vector field $D$ extends to a rational vector field on $\overline X$. After removing divisorial zeroes and poles, it defines a foliation $\overline{\mathcal F}_D$ on $\overline X$.

We define $\Bir(\overline{\mathcal F}_D)$ to be the group of birational self-maps of $\overline X$ preserving the
foliation $\overline{\mathcal F}_D$. Thus, there are natural inclusions
\[
\Aut_D(A_{c,q})
\subseteq
\Pol(\mathcal F_D)
\subseteq
\Bir(\overline{\mathcal F}_D).
\]

The results proved in this paper concern the smallest of these groups, $\Aut_D(A_{c,q})$. However, the foliation viewpoint suggests further questions. Even when
\[
\Aut_D(A_{c,q})=\{\id\},
\]
the larger groups $\Pol(\mathcal F_D)$ and $\Bir(\overline{\mathcal F}_D)$ may be nontrivial. 

This motivates the following questions:

\begin{question}
Let $D$ be a simple derivation of $A_{c,q}$. Is the group $\Pol(\mathcal F_D)$ finite?
\end{question}

Since the boundary divisor plays a more important role in the
Danielewski-type setting, one may also formulate a relative version:

\begin{question}
Is the group $\Bir(\overline X,\Delta,\overline{\mathcal F}_D)$ of birational maps preserving both the foliation and the boundary finite?
\end{question}

\begin{question}
Which Kodaira dimensions can occur for the compactified foliations $\overline{\mathcal F}_D$ arising from simple derivations of Danielewski-type algebras?
\end{question}

\bigskip
\bigskip
\noindent
\textsc{Rene Baltazar}\\
Universidade Federal do Rio Grande - FURG\\
Santo Antônio da Patrulha/RS, Brasil\\
\texttt{renebaltazar.furg@gmail.com}


\begin{thebibliography}{99}

\bibitem{ABEG}
A. Ahouita, R. Baltazar, M. El Kahoui and S. Gaifullin,
\emph{The isotropy group of a derivation on a Danielewski-type algebra},
preprint, arXiv:2510.07059.

\bibitem{Baltazar2016}
R. Baltazar,
\emph{On simple Shamsuddin derivations in two variables},
Anais da Academia Brasileira de Ci\^encias \textbf{88} (2016), 2031--2038.

\bibitem{PanBalta}
R. Baltazar and I. Pan,
\emph{On the automorphism group of a polynomial differential ring in two variables},
Journal of Algebra \textbf{576} (2021), 197--227.

\bibitem{BianchiVeloso}
A. C. Bianchi and M. O. Veloso,
\emph{Locally nilpotent derivations and automorphism groups of certain Danielewski surfaces},
Journal of Algebra \textbf{469} (2017), 96--108.

\bibitem{CMP}
G. Cousin, L. G. Mendes and I. Pan,
\emph{Birational geometry of foliations associated to simple derivations},
Bull. Soc. Math. France \textbf{147} (2019), no. 4, 607--638.

\bibitem{Hart1975}
R. Hart,
\emph{Derivations on regular local rings of finitely generated type},
Journal of the London Mathematical Society \textbf{10} (1975), no. 3,
292--294.

\bibitem{MP}
L. G. Mendes and I. Pan,
\emph{On plane polynomial automorphisms commuting with simple derivations},
J. Pure Appl. Algebra \textbf{221} (2017), no. 4, 875--882.

\bibitem{Nowicki2008}
A. Nowicki,
\emph{An example of a simple derivation in two variables},
Colloquium Mathematicum \textbf{113} (2008), no. 1, 25--31.

\bibitem{Seidenberg1967}
A. Seidenberg,
\emph{Differential ideals in rings of finitely generated type},
American Journal of Mathematics \textbf{89} (1967), no. 1, 22--42.

\bibitem{Shamsuddin1977}
A. Shamsuddin,
\emph{Automorphisms and skew polynomial rings},
Ph.D. thesis, University of Leeds, 1977.

\bibitem{Shamsuddin1982}
A. Shamsuddin,
\emph{Rings with automorphisms leaving no nontrivial proper ideals invariant},
Canadian Mathematical Bulletin \textbf{25} (1982), no. 4, 478--486.

\end{thebibliography}
\end{document}